\theoremstyle{thmstyleone}%
\newtheorem{theorem}{Theorem}
\newtheorem{proposition}[theorem]{Proposition}%
\theoremstyle{thmstyletwo}%
\newtheorem{remark}{Remark}%
\theoremstyle{thmstylethree}%
\newcommand{\figsize}{0.4}
\begin{document}

\title[Sketching the Krylov Subspace]{Sketching the Krylov Subspace: Faster Computation of the Entire Ridge Regularization Path}


\author[1]{\fnm{Yifei} \sur{Wang}}\email{wangyf18@stanford.edu}

\author[1]{\fnm{Mert} \sur{Pilanci}}\email{pilanci@stanford.edu}

\affil[1]{\orgdiv{Department of Electrical Engineering}, \orgname{Stanford University}, \orgaddress{\street{350 Serra Mall}, \city{Stanford}, \postcode{94305}, \state{CA}, \country{USA}}}


\abstract{We propose a fast algorithm for computing the entire ridge regression regularization path in nearly linear time. Our method constructs a basis on which the solution of ridge regression can be computed instantly for any value of the regularization parameter. Consequently, linear models can be tuned via cross-validation or other risk estimation strategies with substantially better efficiency. The algorithm is based on iteratively sketching the Krylov subspace with a binomial decomposition over the regularization path. We provide a convergence analysis with various sketching matrices and show that it improves the state-of-the-art computational complexity. We also provide a technique to adaptively estimate the sketching dimension. This algorithm works for both the over-determined and under-determined problems. We also provide an extension for matrix-valued ridge regression. The numerical results on real medium and large scale ridge regression tasks illustrate the effectiveness of the proposed method compared to standard baselines which require super-linear computational time.}

\keywords{Ridge regression, randomized algorithms, kernel ridge regression.}



\maketitle

\section{Introduction}
We consider the following ridge regression problem
\begin{equation}\label{least-square}
\min_{x\in \mbR^d} \frac{1}{2}\|Ax-b\|_2^2+\frac{\lambda}{2}\|x\|_2^2,
\end{equation}
where $A\in \mbR^{n\times d}$ is the data matrix, $b\in \mbR^d$ is the label vector and $\lambda>0$ is a regularization parameter. 

Typically, one needs to estimate the regularization parameter $\lambda$ from a set $\Lambda$ of possible values and select the $\lambda$ with the best performance on the validation set. For moderate size problem, to obtain an estimate for the regularization parameter $\lambda$, one can apply risk estimators such as generalized cross-validation \cite{gcvaa}, Stein’s unbiased risk estimate \cite{fsure}, or unbiased prediction risk estimate \cite{cmfib}. Moreover, solving this problem efficiently on a large scale is of great interest in model selection \cite{msikr} and transfer learning \cite{asodt}. For instance, in deep learning-based transfer learning, one needs to tune the last layer of a trained neural network to adapt to a different dataset. Essentially, training the last layer of the neural network is a ridge regression problem using squared loss, where the previous layers of the neural network can be fixed as feature extractors of the raw data. 

Classical methods for solving ridge regression include Singular Value Decomposition (SVD) method,  warm-started conjugate gradient (CG) method, warm-started preconditioned conjugate gradient (PCG) method, and warm-started iterative Hessian (IHS) sketch method \citep{ihs}. The SVD method constructs a decomposition of the data matrix and provides a closed-form parameterization of the optimal solution to \eqref{least-square} as a function of $\lambda$ (see e.g. \citep{frr}). The warm-started CG/PCG/IHS method iteratively solves \eqref{least-square} with different values of the regularization parameter $\lambda$. They leverage previous solutions along the regularization path as initializers to warm-start the iterations. Besides, for kernel ridge regression, Nystr\"om computational regularization (NCR) \citep{limnc} applies the Nystr\"om subsampling approaches to reduce the computation cost for calculating the regularization path.  

\begin{table}[htbp]
    \centering

    \begin{tabular}{|c|c|c|}
    \hline
         Method& small $T$&large $T$\\\hline
         IHS-BIN (Ours)&$O\pp{d_e^2d+(dd_e+\text{nnz}(A))\log\pp{\frac{\lambda_\text{max}}{\lambda_\text{min}}}}$&$O(Td)$\\\hline
         SVD&$O(nd^2)$&$O(Td^2)$\\\hline
         warm-started CG&$O(T\text{nnz}(A)\sqrt{\kappa})$&$O(T\text{nnz}(A)\sqrt{\kappa})$\\\hline
         warm-started PCG&$O(d_e^2d+\log(d_e)\text{nnz}(A))$&$O(T\text{nnz}(A))$\\\hline
         warm-started IHS&$O(d_e^2d+\log(d_e)\text{nnz}(A))$&$O(T\text{nnz}(A))$\\\hline
         NCR&$O(d_e^2n)$&$O(Td_e^3)$\\\hline
          \end{tabular}
          
    \caption{Computational complexity of solving the ridge regression problem \eqref{least-square} for $T$ distinct values of the regularization parameter $\lambda$. Here $\kappa$ is the condition number of $A+\lambda_\text{min} I$ and $d_e$ is effective dimension of $A+\lambda_\text{min} I$. }
    \label{tab:comp}
\end{table}


In this paper, we present the iterative Hessian sketch method with binomial decomposition (IHS-BIN) for rapidly solving ridge regression with multiple regularization parameters. The idea is to approximate the linear opearator $(A^TA+\lambda I)^{-1}$ via a polynomial of $\lambda$ constructed by the iterative Hessian sketch method (IHS) \cite{ihs}. To be specific, IHS is an efficient randomized algorithm for solving large-scale least-square problems. We first focus on the overdetermined case, where $n>d$, and then introduce an extension to the underdetermined case $n\le d$. Suppose that the size of $\Lambda$ is $T$ and $\Lambda\subseteq [\lambda_\text{min},\lambda_\text{max}]$ with $0<\lambda_\text{min}<\lambda_\text{max}$. We compare the computation cost of the proposed IHS-BIN method with other classical solvers for ridge regression in Table \ref{tab:comp}. For the case where $T$ is large, IHS-BIN with the computation cost of $O(Td)$ is the fastest solver to the best of our knowledge. When $T$ is small, IHS-BIN still offers a substantial improvement in complexity. In Figure \ref{fig:random_od} and \ref{fig:cifar}, we present the results on a randomly generated data example and a matrix-valued ridge regression problem with kernel matrix based on the CIFAR-10 dataset. We can observe that IHS-BIN can be significantly faster than other solvers.

\begin{figure}[ht]
\centering
\begin{minipage}[t]{\figsize\textwidth}
\centering
\includegraphics[width=\linewidth]{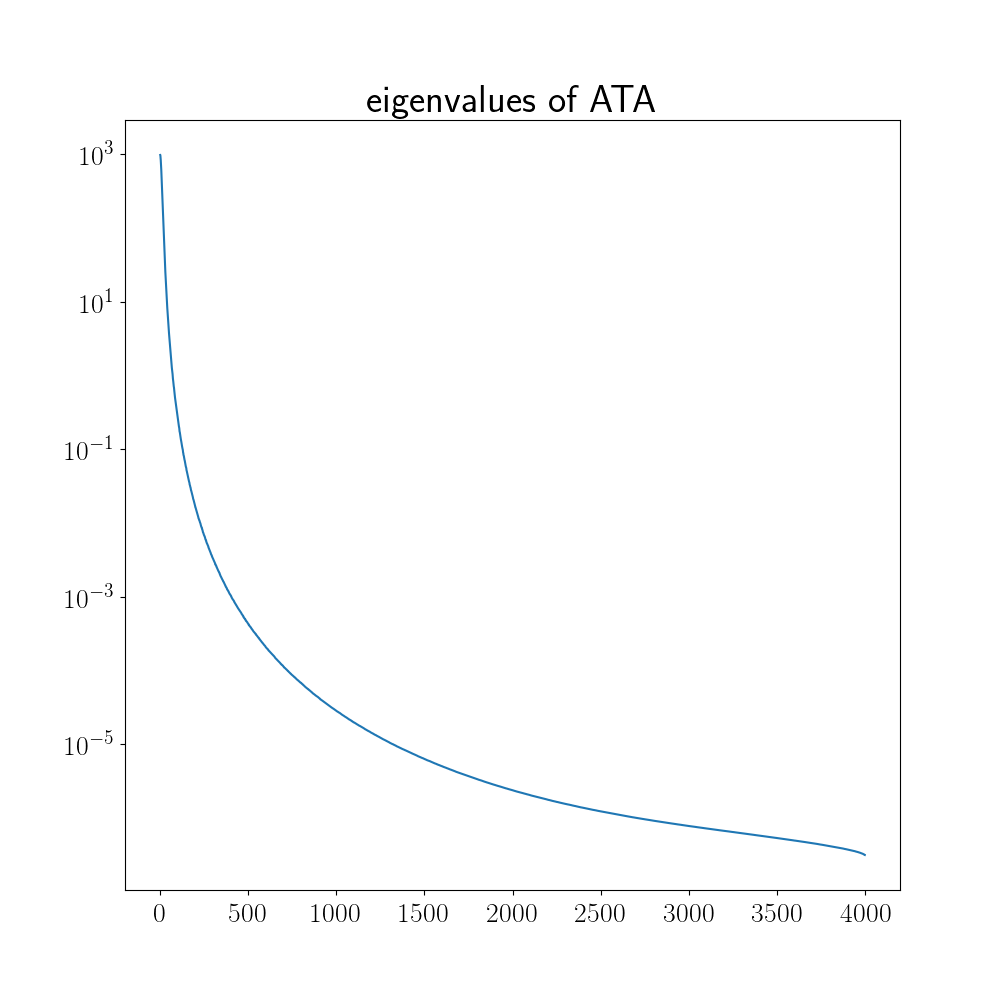}
\end{minipage}
\begin{minipage}[t]{\figsize\textwidth}
\centering
\includegraphics[width=\linewidth]{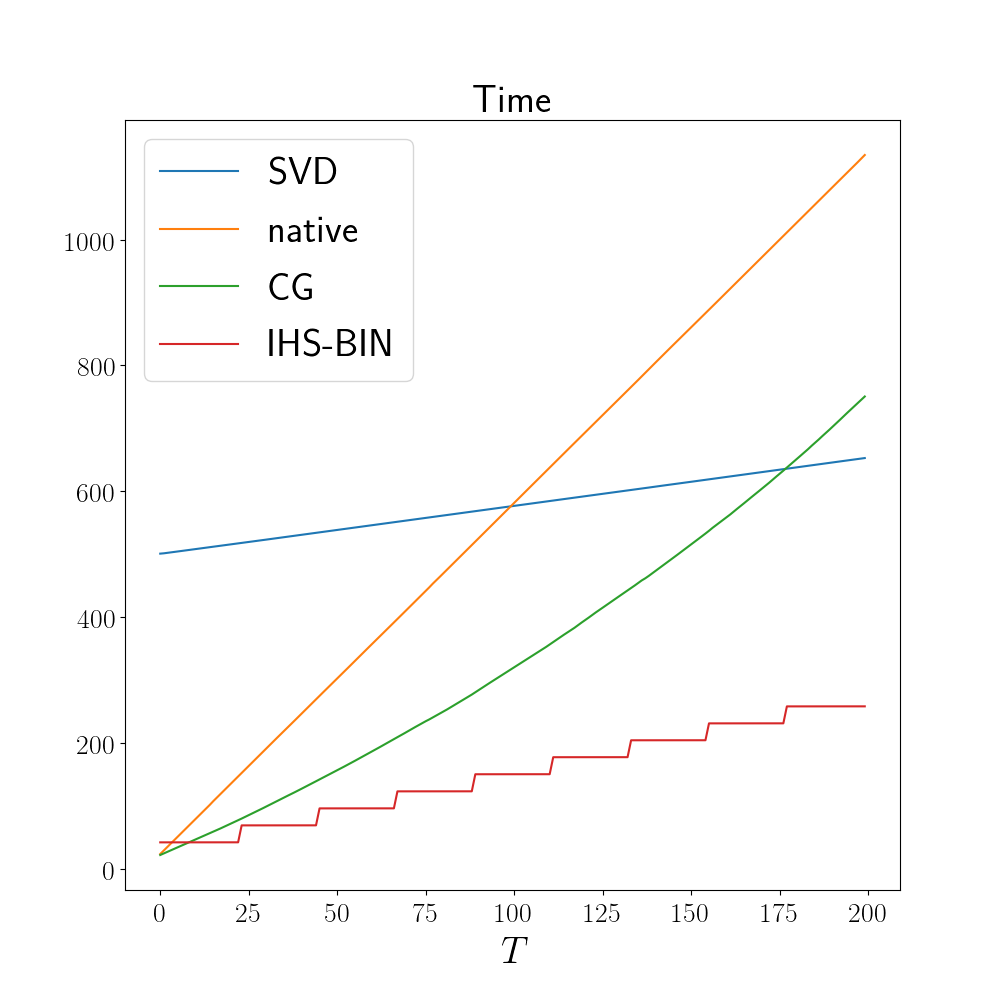}
\end{minipage}
\begin{minipage}[t]{\figsize\textwidth}
\centering
\includegraphics[width=\linewidth]{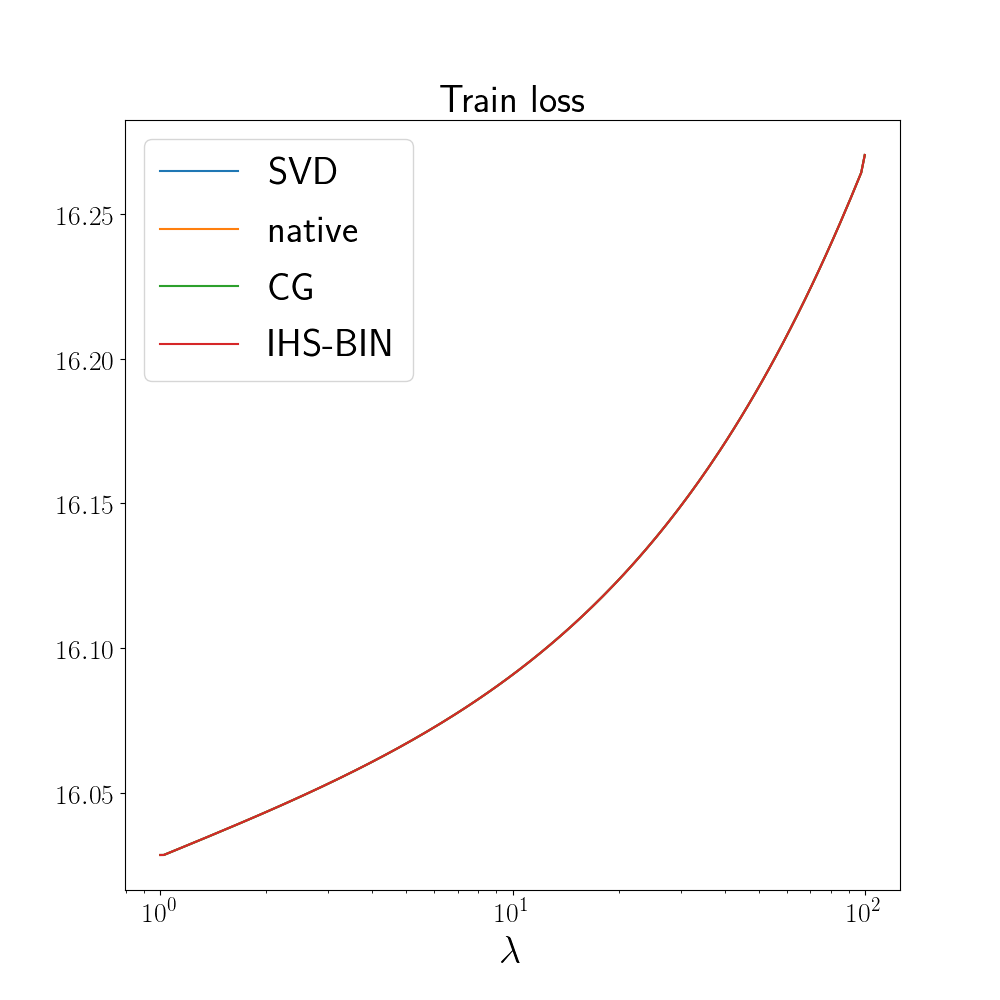}
\end{minipage}
\begin{minipage}[t]{\figsize\textwidth}
\centering
\includegraphics[width=\linewidth]{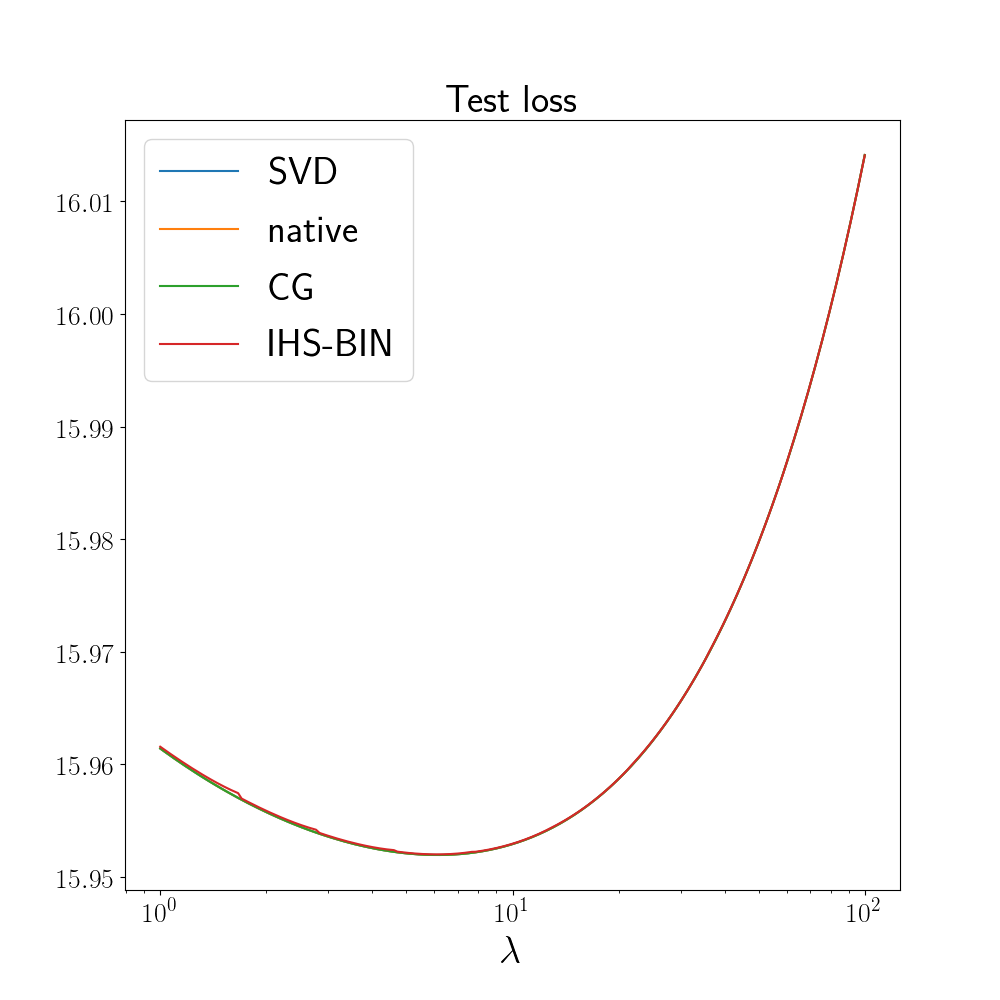}
\end{minipage}
\caption{Training loss, test loss and CPU time on randomly generated data. $n=20000, d=4000$, $m=1600$. $\lambda_\text{min}=1$. $\lambda_\text{max}=100$. `native' is the native linear system solver in NumPy.  } 
\label{fig:random_od}
\end{figure}

\begin{figure}[!htbp]
\centering
\begin{minipage}[t]{\figsize\textwidth}
\centering
\includegraphics[width=\linewidth]{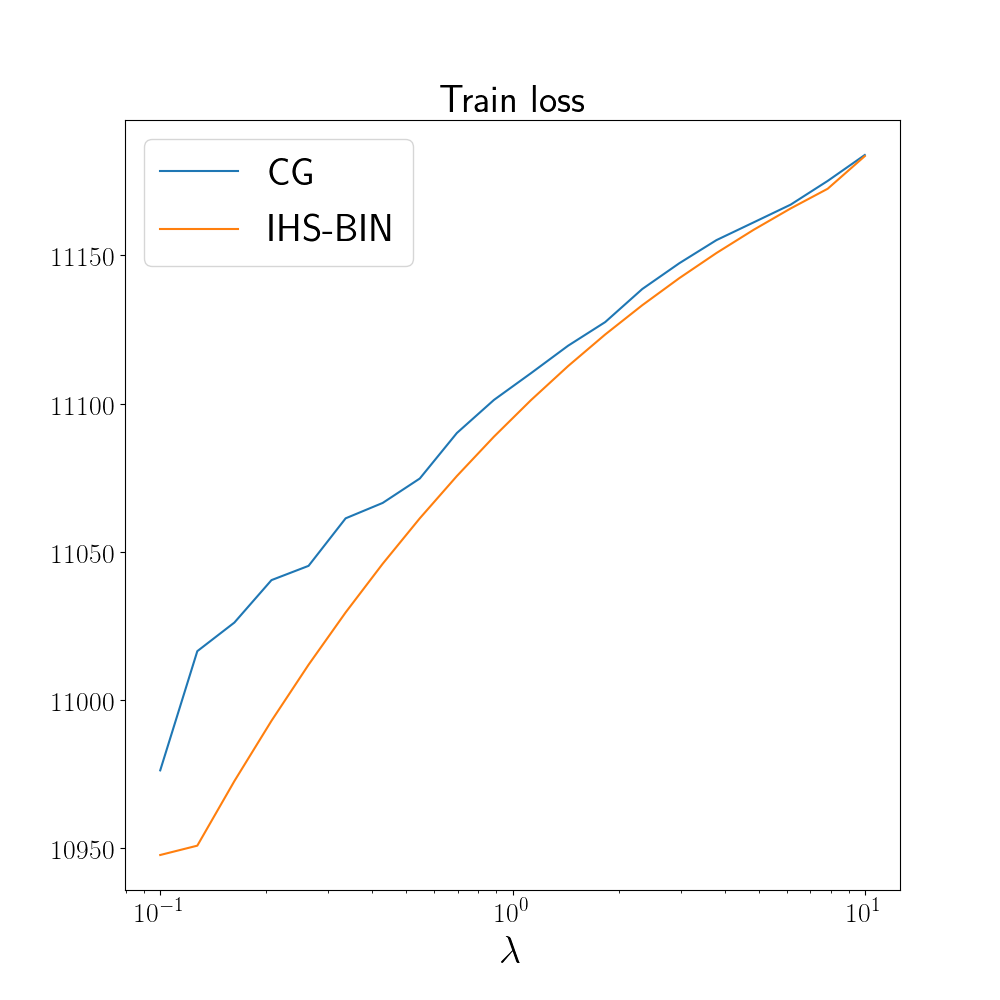}
\end{minipage}
\begin{minipage}[t]{\figsize\textwidth}
\centering
\includegraphics[width=\linewidth]{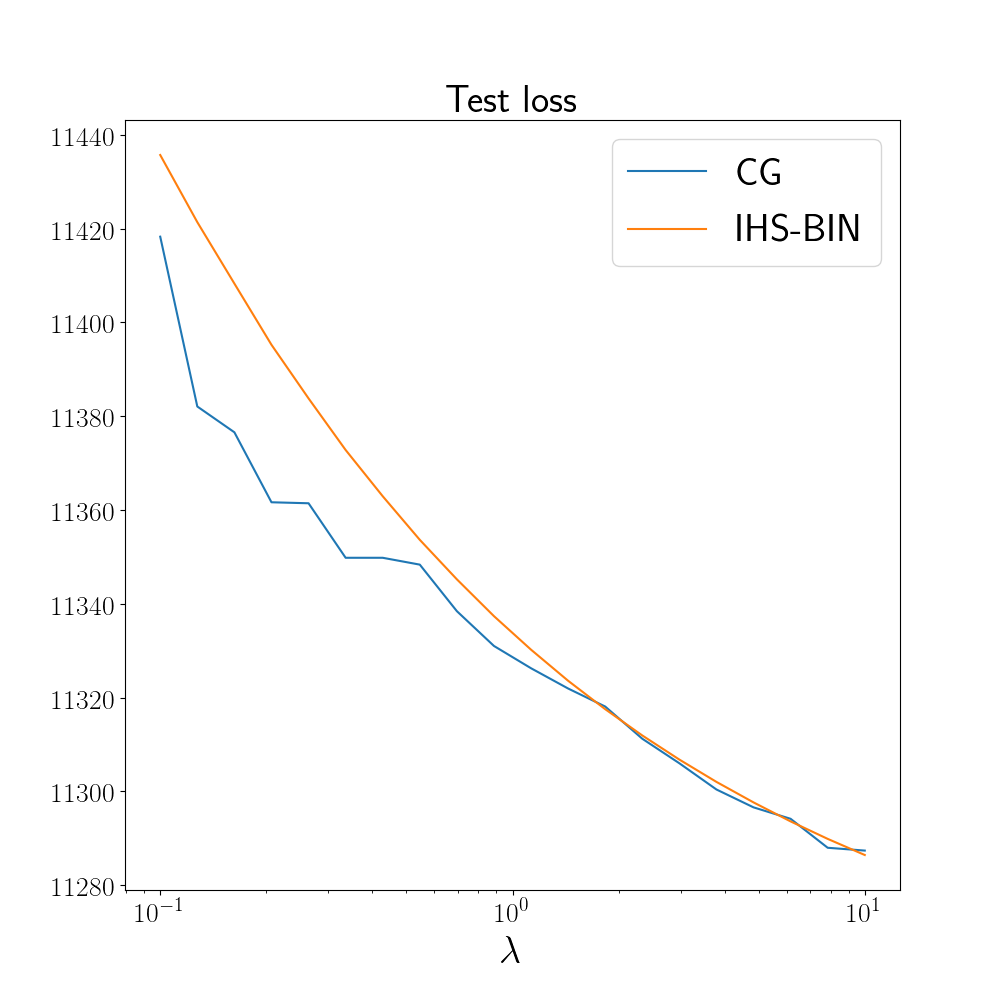}
\end{minipage}
\begin{minipage}[t]{\figsize\textwidth}
\centering
\includegraphics[width=\linewidth]{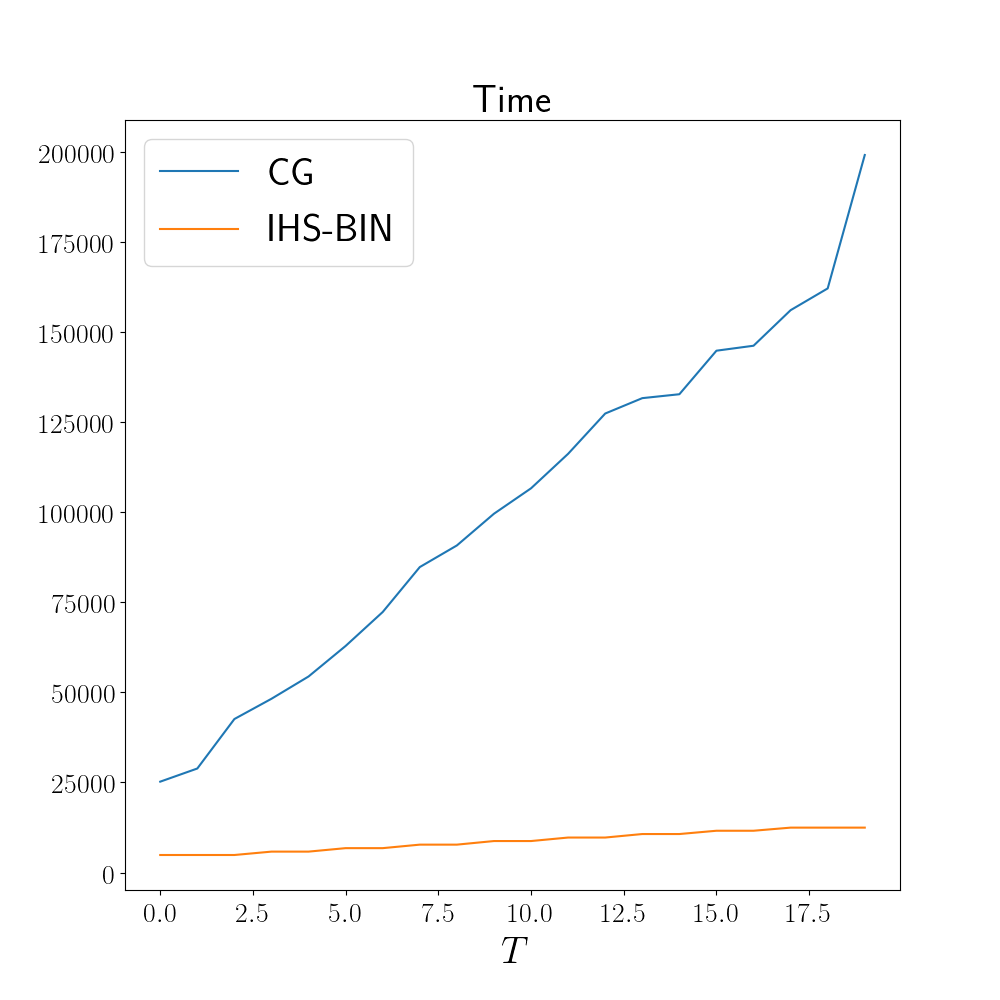}
\end{minipage}
\caption{Training loss, test loss and CPU time on the CIFAR10 dataset. Matrix-valued kernel ridge regression. $n=25000, d=25000, m=10000$. $\lambda_\text{min}=0.1$. $\lambda_\text{max}=10$. We do not calculate the eigenvalues of $A^TA$ since $d$ is large. }
\label{fig:cifar}
\end{figure}
This paper is organized as follows. In section \ref{sec:review}, we review classical methods for solving ridge regression with multiple $\lambda$s. As the motivation of IHS-BIN, we introduce gradient descent with binomial decomposition in section \ref{sec:gd}. We present IHS with binomial decomposition and analyze its convergence rate with different sketching matrices 
in section \ref{sec:ihs}. In section \ref{sec:extension}, we also provide a practical method for estimating an appropriate sketching dimension and extend our algorithm to the under-determined case and matrix-valued ridge regression. 
The numerical results are presented in section \ref{sec:num}.

\section{Review of classical methods}\label{sec:review}
We briefly review several classical methods for solving \eqref{least-square} with $\lambda \in \Lambda$, which contains $T$ distinct values. We focus on the case where $n>d$.

\subsection{Singular value decomposition}
Suppose that the singular value decomposition of $A$ is given by $A=U\Sigma V^T$, where $U\in \mbR^{n\times d},\Sigma\in \mbR^{d\times d}$ and $V\in \mbR^{d\times d}$. 
Then, we can calculate the optimal solution to \eqref{least-square} by 
$$
x^*(\lambda) := (A^TA+\lambda I)^{-1}A^Tb = V(\Sigma^2+\lambda I)^{-1}\Sigma U^Tb
$$
The above expression shows that $x^*(\lambda)$ can be computed for all values of $\lambda$ when the decomposition factors are cached.
The total computation cost of the SVD based computation of the ridge regularization path is therefore $\underbrace{O(nd^2) }_{\textrm{SVD}}+\underbrace{O(d^2T)}_{\mathrm{updating} \,\,\lambda}$. 
\subsection{Warm-started conjugate gradient method}
Suppose that we arrange $\lambda\in \Lambda$ in decreasing order. Then, we can apply the conjugate gradient method to solve \eqref{least-square} with $\lambda\in \Lambda$ from large to small values of $\lambda$. We can use the solution for \eqref{least-square} with larger $\lambda$ as initialization for the next value of $\lambda$. From \citep{hackbusch1994iterative}, the overall computational cost is given by
$$
O(T\text{nnz}(A)\sqrt{\kappa}\log(1/\epsilon)).
$$
Here $\text{nnz}(A)$ denotes the number of non-zero elements in $A$, $\epsilon$ is the tolerance of precision to stop the conjugate gradient method and $\kappa=(\sigma_{\text{max}}(A)+\lambda_\text{min})/(\sigma_{\text{min}}(A)+\lambda_\text{min})$ is the largest condition number of $A+\lambda I$ with $\lambda\in \Lambda$. We let $\sigma_{\text{max}}(A)$ and $\sigma_{\text{min}}(A)$ to represent the largest/smallest singular value of $A$.

\subsection{Warm-started preconditioned conjugate gradient method}
It is well known that the number of iterations of warmed-started CG heavily depends on the condition number $\kappa$. For ill-conditioned data matrices, the condition number $\kappa$ may be very large, which leads to slow convergence of the conjugate gradient method, even using the warm-starts. A widely-applied approach to deal with the large condition number $\kappa$ is to apply a randomized preconditioned conjugate gradient (PCG) method \citep{rokhlin2008fast}. Specifically, we use the random matrix $(A^TS^TSA+\lambda I)^{-1}$ as the preconditioner, where $S\in \mbR^{m\times n}$ is a sketching matrix. The sketching dimension $m$ is usually proportional to the effective dimension $d_e$, which will be discussed in detail in section \ref{ssec:conv}. To calculate the preconditioner $(A^TS^TSA+\lambda I)^{-1}$ for various $\lambda$, we usually compute the SVD of $SA$, which takes $O(d_e^2d)$ time. The computational cost of computing $SA$ can be $O(\log(d_e)\mathrm{nnz}(A))$, depending on the type of the sketch as shown in section \ref{sec:ihs}. Given the preconditioner $(A^TS^TSA+\lambda I)^{-1}$, the computational cost of warm-started PCG is given by
$$
O(T\text{nnz}(A)\log(1/\epsilon)).
$$

\subsection{Warm-started iterative Hessian sketch}
The iterative Hessian sketch (IHS) method \citep{ihs} is a randomized sketching method for solving least square problems. The update rule of IHS follows
\begin{equation}
\begin{aligned}
x_{k+1} = &x_k-\tau(A^TS^TSA+\lambda I)^{-1}A^T(Ax_k-b+\lambda x_k)\\
\end{aligned}
\end{equation}
Here $\tau>0$ is the step size. Similarly, to compute $(A^TS^TSA+\lambda I)^{-1}$ for various $\lambda$, usually we compute the SVD of $SA$, which takes $O(d_e^2d)$ time. With carefully chosen sketching dimension and sketching matrix, IHS can converge in $O(\log(1/\epsilon))$ iterations. Although IHS is simpler, the computational cost of IHS is similar to the PCG method, which is given by
$$
O(T\text{nnz}(A)\log(1/\epsilon)).
$$
We note that the above complexity can be high for large data matrices, especially when $T$, the number of points in the regularization path is also large. In contrast, the proposed approach has complexity $O(Td\log(1/\epsilon))$, which can be significantly smaller when $n$ is large.

\section{Gradient descent regularization path}
\label{sec:gd}
Now we illustrate the main idea underlying our algorithm. Note that gradient descent method with fixed step size for a small number of iterations can approximate the minimizer of \eqref{least-square}. Although this will not be practical, our proposed method is inspired by this approach. Namely, consider the updates
$$
\begin{aligned}
x_{k+1}& = x_k-\tau\pp{A^T(Ax_k-b)+\lambda x_k}\\
&=(I- \tau (A^TA+\lambda I) )x_k+\tau A^Tb\\
&=:Mx_k+\tau A^Tb.
\end{aligned}
$$
Here we denote $M=(I- \tau (A^TA+\lambda I) )$. We can express $x_k$ in terms of $M$ and $x_0$ by
$$
x_k = M^k x_0+\tau M^{k-1}A^Tb+\tau M^{k-2} A^Tb+\dots+\tau A^Tb.
$$
The binomial expansion formula for $M^k$ is given by
$$
M^k=\sum_{j=0}^k{k\choose j}\lambda^{j}(-\tau)^{j}(I- \tau A^TA)^{k-j}.
$$
For simplicity, we assume that the iterations are initialized at $x_0=0$. Then, we can rewrite $x_k$ as
$$
\begin{aligned}
x_k =& \tau\sum_{i=0}^{k-1} M^{i}A^Tb\\
=&\tau \sum_{i=0}^{k-1}\sum_{j=0}^i{i\choose j}\lambda^{j}(-\tau)^{j}(I- \tau A^TA)^{i-j}A^Tb\\
=&\tau \sum_{j=0}^{k-1}(-\tau \lambda)^{j}\sum_{i=0}^{k-1-j} {i+j\choose j}(I- \tau A^TA)^{i}A^Tb\\
=&\tau \sum_{j=0}^{k-1}(-\tau \lambda)^{j}u_j.
\end{aligned}
$$
Here we denote $u_j=\sum_{i=0}^{k-1-j} {i+j\choose j}(I- \tau A^TA)^{i}A^Tb$. Note that the above expression provides an approximate closed form formula for $x(\lambda)$ for all values of $\lambda$. More specifically, if we compute $u_0,\dots,u_{k-1}$, we can instantly compute $x_k=x_k(\lambda)$ for different $\lambda$ parameters. We call this method GD-BIN and summarize it in Algorithm \ref{alg:gd-bin}.

\begin{algorithm2e}[ht]
\caption{Gradient descent with binomial decomposition. (GD-BIN)}\label{alg:gd-bin}
\KwIn{ $A,b,\Lambda=\{\lambda_i\}_{i=1}^T$, iteration number $k$.}

Compute $u_j=\sum_{i=0}^{k-1-j} {i+j\choose j}(I- \tau A^TA)^{i}A^Tb$ for $j=0,\dots,k-1$;
\For{$i=1,\dots,T$}{
Compose $x_i= \tau \sum_{j=0}^{k-1}(-\tau \lambda_i)^{j}u_j$;}

\KwOut{$\{x_i\}_{i=1}^{T}$}
\end{algorithm2e}

However, the convergence rate heavily depends on the condition number $\kappa$ of $A^TA+\lambda_\text{min} I$. 
To obtain an $\epsilon$-approximate solution to \eqref{least-square}, it takes approximately $k=O(\log(1/\epsilon)\kappa)$ iterations. The overall computation cost is as follows
$$
\begin{aligned}
&\underbrace{O(ndk)}_{\text{compute } (I- \tau A^TA)^{i}A^Tb\text{ and }b_j}+\underbrace{O(Tdk)}_{\text{evaluate }x_k}\\
=&\underbrace{O(nd(\log(1/\epsilon)\kappa)}_{\text{compute } (I- \tau A^TA)^{i}A^Tb\text{ and }b_j}+\underbrace{O(Td(\log(1/\epsilon)\kappa)}_{\text{evaluate }x_k}
\end{aligned}
$$

The main drawback of the gradient descent method is the condition number $\kappa$ in the computation cost, which is often prohibitively large in practice. It is interesting to note that the dependence on condition number can be improved to $\sqrt{\kappa}$ using Conjugate Gradient. However, the corresponding regularization path is no longer tractable due to non-constant step-sizes and $\lambda$ can not be updated analogously. To circumvent these difficulties and the dependence on $\kappa$, we take a different approach and introduce the Iterative Hessian sketch (IHS) with binomial decomposition. 

\section{IHS with binomial decomposition}\label{sec:ihs}
Suppose that $S\in \mbR^{m\times n}$ is a sketching matrix, where $m$ is the sketching dimension. IHS \cite{ihs} employs a randomized Newton direction $((\nabla^2 f)^{1/2})^TS^TS(\nabla^2 f)^{1/2})^{-1}$$\nabla f(x)$ to minimize the objective function $f(x)$ in \eqref{least-square}. The sketching dimension $m$ depends on the effective dimension of $A^TA+\lambda I$, which will be defined later and it can be significantly smaller than $d$. Typical choices of sketching matrices include
\begin{itemize}
\item Gaussian sketch: each entry of $S$ follows independent and identically distributed (i.i.d.) Gaussian distribution $\mcN(0,m^{-1})$.
\item CountSketch transform sketch \cite{saatf}: $S$ is initialized as a matrix of zeros. Then, w set $S_{h(i),i}$ to $1$ or $-1$ with equal probability, where $h(i)$ is chosen from $\{1,\dots,n\}$ uniformly at random.
\item Sparse Johnson-Lindenstrauss Transform (SJLT) sketch \cite{sjlt}: with column sparsity $s$, $S$ is constructed by concatenating $s$ independent CountSketch transforms, each of dimension $m/s\times n$.
\item Subsampled Randomized Hadamard Transform (SRHT) sketch \cite{annat}: $S$ is a randomized projection matrix. 
\end{itemize}

The update rule of IHS with a fixed regularization parameter is given by
\begin{equation}\label{equ:ihs}
\begin{aligned}
x_{k+1} = &x_k-\tau(A^TS^TSA+\lambda_0 I)^{-1}A^T(Ax_k-b+\lambda x_k)\\
=&(I- \tau (A^TS^TSA+\lambda_0 I)^{-1}(A^TA+\lambda I) x_k\\
&+\tau (A^TS^TSA+\lambda_0 I)^{-1}A^Tb\\
=&Mx_k+\tau (A^TS^TSA+\lambda_0 I)^{-1} A^Tb.
\end{aligned}
\end{equation}
Here $\lambda_0$ is a fixed parameter. In this case, $M=I- \tau (A^TS^TSA+\lambda_0 I)^{-1}(A^TA+\lambda I) $. 

Based on the update rule of IHS, we can express $x_k$ in terms of $M$ and $x_0$ via
$$
\begin{aligned}
x_k = &M^k x_0+\tau M^{k-1}(A^TS^TSA+\lambda_0 I)^{-1}A^Tb\\
&+\dots+\tau (A^TS^TSA+\lambda_0 I)^{-1} A^Tb.
\end{aligned}
$$
For simplicity, we also assume that the iterations are initialized at $x_0=0$. We denote $P_S = (A^TS^TSA+\lambda_0 I)^{-1} $. 
\begin{proposition}
We can express $x_k$ as a polynomial function of $\lambda$ as follows
$$
x_k=\tau \sum_{j=0}^{k-1}(\tau\lambda)^j\tilde u_j,
$$
Here $\tilde u_j=\sum_{i=j}^{k-1}u_{i,j}$, where $u_{i,j}\in \mbR^d$ can be recursively computed via
\begin{equation}\label{equ:upd}
\begin{aligned}
u_{i+1,0} = (I-\tau P_S A^TA) u_{i,0},\quad u_{i+1,i+1} = -P_S  u_{i,i},\\
u_{i+1,j} = (I-\tau P_S A^TA)u_{i,j}-P_S  u_{i,j-1},\quad 1\leq j\leq i,
\end{aligned}
\end{equation}
with the initial condition $u_{0,0}=P_SA^Tb$. 
\end{proposition}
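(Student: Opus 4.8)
The plan is to separate the single scalar parameter $\lambda$ out of the IHS operator $M$ and then track, iteration by iteration, the polynomial in $\lambda$ that the iterates carry, matching its coefficients against the recursion \eqref{equ:upd}. Starting from \eqref{equ:ihs} with $x_0=0$, I would first record the closed form
$$x_k=\tau\sum_{i=0}^{k-1}M^i P_S A^Tb,\qquad M=I-\tau P_S(A^TA+\lambda I)=(I-\tau P_S A^TA)-\tau\lambda P_S.$$
The key structural remark is that $M$ is affine in $\lambda$: it is the sum of a $\lambda$-free part $N:=I-\tau P_S A^TA$ and the linear term $-\tau\lambda P_S$. Because $N$ and $P_S$ need not commute, $M^i$ is \emph{not} a binomial expansion, but it is still a matrix polynomial in $\lambda$ of degree at most $i$, and this is all that is needed.

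Next I would introduce the partial iterates $v_i:=M^i P_S A^Tb\in\mbR^d$, so that $v_0=P_S A^Tb$, $v_{i+1}=M v_i = N v_i-\tau\lambda P_S v_i$, and $x_k=\tau\sum_{i=0}^{k-1}v_i$. The core of the proof is the claim
$$v_i=\sum_{j=0}^{i}(\tau\lambda)^j u_{i,j},$$
where $u_{i,j}$ is exactly the family produced by \eqref{equ:upd}, together with the convention $u_{i,j}=0$ whenever $j<0$ or $j>i$. I would prove this by induction on $i$: the base case $i=0$ is the stated initial condition $u_{0,0}=P_S A^Tb$. For the inductive step, substitute the claimed expansion for $v_i$ into $v_{i+1}=N v_i-\tau\lambda P_S v_i$, distribute, and reindex the second sum by $j\mapsto j+1$; collecting the coefficient of $(\tau\lambda)^j$ yields $N u_{i,0}$ when $j=0$, $N u_{i,j}-P_S u_{i,j-1}$ when $1\le j\le i$, and $-P_S u_{i,i}$ when $j=i+1$, which are precisely the three cases of \eqref{equ:upd} (with the zero convention absorbing the missing boundary terms).

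Finally I would sum over $i$ and interchange the order of summation:
$$x_k=\tau\sum_{i=0}^{k-1}v_i=\tau\sum_{i=0}^{k-1}\sum_{j=0}^{i}(\tau\lambda)^j u_{i,j}=\tau\sum_{j=0}^{k-1}(\tau\lambda)^j\sum_{i=j}^{k-1}u_{i,j}=\tau\sum_{j=0}^{k-1}(\tau\lambda)^j\tilde u_j,$$
which is the asserted identity once $\tilde u_j=\sum_{i=j}^{k-1}u_{i,j}$ is recalled. I do not anticipate a real obstacle here: the argument is an induction plus a double-sum swap, and the only points needing care are (i) keeping the three boundary regimes $j=0$, $1\le j\le i$, $j=i+1$ of the recursion consistent under the index shift, and (ii) using the convention $u_{i,j}=0$ outside $0\le j\le i$ so that the top and bottom coefficients in the inductive step come out correctly. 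An alternative, more combinatorial route would be to expand $M^i$ directly as the sum of all length-$i$ words in the two noncommuting letters $N$ and $-\tau P_S$ and to identify $(\sum_{\text{words with }j\text{ copies of }-\tau P_S})P_S A^Tb$ with $u_{i,j}$; conditioning on the leftmost letter then makes \eqref{equ:upd} immediate, at the cost of slightly heavier notation.
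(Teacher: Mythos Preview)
Your proposal is correct and follows essentially the same route as the paper: prove by induction on $i$ that $M^iP_SA^Tb=\sum_{j=0}^{i}(\tau\lambda)^j u_{i,j}$ using the splitting $M=(I-\tau P_SA^TA)-\tau\lambda P_S$, then sum over $i$ and swap the order of summation. The only differences are cosmetic (you name $N$ and $v_i$ explicitly and add the harmless convention $u_{i,j}=0$ outside $0\le j\le i$), and your optional word-expansion remark is not in the paper.
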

\begin{proof}
We first claim that for all integer $i\geq 0$,
\begin{equation}\label{equ:dec}
M^iP_SA^Tb = \sum_{j=0}^i (\tau\lambda)^j u_{i,j}.
\end{equation}
We prove this claim by mathematical induction. It is easy to observe that \eqref{equ:dec} hold for $i=0$. Suppose that \eqref{equ:dec} holds for $i$. For $i+1$, we note that
$$
\begin{aligned}
&M^{i+1} A^Tb=M(M^iA^Tb)\\
=& \sum_{j=0}^i \pp{(\tau\lambda)^j (I-P_S A^TA) u_{i,j}-(\tau\lambda)^{j+1} P_S  u_{i,j}}\\
=&\sum_{j=1}^i(\tau\lambda)^j\pp{(I-P_S A^TA) u_{i,j}-P_S  u_{i,j-1}}\\
&+(I-P_S A^TA) u_{i,0}-(\tau\lambda)^{i+1} P_S  u_{i,i}\\
=&\sum_{j=0}^{i+1} (\tau\lambda)^ju_{i+1,j}.
\end{aligned}
$$
Hence, \eqref{equ:dec} also holds for $i+1$. 

As a result, we can easily compute that
$$
\begin{aligned}
x_k = \sum_{i=0}^{k-1}\tau M^{i}P_SA^Tb
= \tau \sum_{i=0}^{k-1} \sum_{j=0}^i (\tau\lambda)^j u_{i,j}
= \tau \sum_{j=0}^{k-1}(\tau\lambda)^j \tilde u_j.
\end{aligned}
$$
\end{proof}

To compute $(A^TS^TSA+\lambda_0 I)^{-1} $, we perform the singular value decomposition on $SA$, i.e., $SA=U_1\Sigma_1V_1$. Suppose that $m<d$. Then, we have
$$
(A^TS^TSA+\lambda_0 I)^{-1} = V_1^T (\Sigma_1^2+\lambda_0 I)^{-1}V_1+\lambda_0^{-1}(I-V_1^TV_1).
$$
Thus, for an arbitrary $v\in \mbR^d$, we have
$$
(A^TS^TSA+\lambda_0 I)^{-1}v = v/\lambda_0+V_1^T( (\Sigma_1^2+\lambda_0 I)^{-1}-\lambda_0^{-1})V_1v.
$$
For $m\geq d$, then, we have
$$
(A^TS^TSA+\lambda_0 I)^{-1}v = V_1^T(\Sigma_1^2+\lambda_0 I)^{-1}V_1v.
$$

We summarize the calculation of $\tilde u_0,\dots,\tilde u_{k-1}$ in Algorithm \ref{alg:basis}.
\begin{algorithm2e}[ht]
\caption{Calculation of basis $\tilde u_0,\dots,\tilde u_{k-1}$ for binomial decomposition.}\label{alg:basis}
\KwIn{ $A,b,P_S,\tau,k$.}
Set $u_i=0$ and $\tilde u_i=0$ with $i=0,\dots,k-1$\;
 Calculate $u_0=-P_S A^Tb$\;
 Let $\tilde u_0=\tilde u_0+u_0$\;
\For{$i=1$ to $k-1$}{
 Calculate $u_i=-P_S u_{i-1}$\;
\For{$j=i-1$ to $1$}{
 Calculate $u_j=u_j-P_S(\tau A^TA u_j+u_{j-1})$\;
}
 Calculate $u_0=u_0-P_S\tau A^TA u_0$\;
 Update $\tilde u_j=\tilde u_j+u_j$ for $j=0,\dots,i$\;
}
\KwOut{$\{\tilde u_j\}_{j=0}^{k-1}$}
\end{algorithm2e}
For computing the sketching $SA$, we list the computation cost for different sketching matrix as follows:
\begin{itemize}
    \item Gaussian sketch: $O(mnd)$ or $O(m\text{nnz}(A))$ for a sparse matrix.
    \item SRHT sketch: $O(\log(m) nd)$ or $O(\log(m)\text{nnz}(A))$ for a sparse matrix.
    \item SJLT sketch: $O(s nd)$ or $O(s\text{nnz}(A))$ for a sparse matrix. Here $s$ is the sparsity of SJLT sketch. 
\end{itemize}
As the sketching dimension is proportional to the effective dimension $d_e$, which can be significantly smaller than $d$, here we assume that $m<d$. Hence, the computation cost to compute the SVD of $SA$ is $O(m^2d)$. Ignoring the complexity of sketching, the computation cost of IHS-BIN follows
$$
\begin{aligned}
\underbrace{O(m^2d)}_{\text{SVD of } SA}+\underbrace{O((md+nd)k^2)}_{\text{compute } \tilde u_j}+\underbrace{O(Tdk)}_{\text{evaluate }x_k}.\\
\end{aligned}
$$
For a sparse matrix $A$, the computation cost reduces to 
$$
\begin{aligned}
&
\underbrace{O(m^2d)}_{\text{SVD of } SA}+\underbrace{O((md+\text{nnz}(A))k^2)}_{\text{compute } \tilde u_j}+\underbrace{O(Tdk)}_{\text{evaluate }x_k}.\\
\end{aligned}
$$
The overall algorithm is summarized in Algorithm \ref{alg:ihs-bin}. 
\begin{algorithm2e}[ht]
\caption{Iterative Hessian Sketch with binomial decomposition. (IHS-BIN)}\label{alg:ihs-bin}
\KwIn{ $A,b,\Lambda=\{\lambda_i\}_{i=1}^T$, iteration number $k$.}
Generate the sketching matrix $S$ and compute the SVD of $SA$;
\For{$i=1,\dots,T$}{
Compute $\{\tilde u_j\}_{j=0}^{k-1}$ using Algorithm \ref{alg:basis};
 Compose $x_i=\tau \sum_{j=0}^{k-1}(\tau\lambda_i)^j\tilde u_j$;
}
\KwOut{ $\{x_i\}_{i=1}^{T}$}
\end{algorithm2e}

\subsection{Sharp estimates of extreme eigenvalues of $C_S$}
We review serveral sharp estimates of $\gamma_1,\gamma_d$ and discuss the probability that $\mcE_S$ holds. For the Gaussian case, we have the following theorem introduced in \cite{edasm}.
\begin{theorem}
Suppose that $S\in \mbR^{m\times n}$ is a Gaussian sketching matrix. Consider $d_e$ defined in \eqref{equ:de} and a parameter $\rho\in(0,1)$. If $m\geq d_e/\rho$, then for any $\eta\in(0,(1-\sqrt{\rho})^2/4)$, with $c(\eta)=\pp{\frac{1+\sqrt{\eta}}{1-\sqrt{\eta}}}^2$ and
$$
\lbb{&\rho_1=1-\|D\|_2^2+\|D\|_2^2(1+\sqrt{\rho})^2(1+\sqrt{\eta})^2,\\
&\rho_2=1-\|D\|_2^2+\|D\|_2^2\pp{1-\sqrt{c(\eta)\rho}}^2,}
$$
the event $\mcE_S$ holds with probability at least $1-16e^{-\eta^2\rho m/2}$
\end{theorem}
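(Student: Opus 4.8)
The plan is to recognize this as the Gaussian case of the sketching guarantee of \cite{edasm} and to reduce it to sharp non-asymptotic bounds on the extreme singular values of a Gaussian matrix whose aspect ratio is controlled by $d_e/m \le \rho$. First I would pass to the singular value decomposition $A = U\Sigma V^T$ and set $D = \Sigma(\Sigma^2 + \lambda_0 I)^{-1/2}$, so that $D$ is diagonal with $\|D\|_2 < 1$ and $\|D\|_F^2 = d_e$ by \eqref{equ:de}. Since $U$ has orthonormal columns, $G := SU \in \mathbb{R}^{m \times d}$ is a Gaussian matrix with i.i.d.\ $\mathcal{N}(0, 1/m)$ entries, and a short computation shows that the eigenvalues $\gamma_1 \ge \dots \ge \gamma_d$ of $C_S$ coincide, after the orthogonal change of basis given by $V$, with the eigenvalues of $W := D G^T G D + I - D^2$. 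Since $\mathbb{E}[G^T G] = I$ we have $\mathbb{E}[W] = I$; more precisely $\gamma_1 = 1 + \lambda_{\max}\big(D(G^T G - I)D\big)$ and $\gamma_d = 1 - \lambda_{\max}\big(D(I - G^T G)D\big)$. Hence it suffices to establish, on an event of probability at least $1 - 16 e^{-\eta^2 \rho m / 2}$, the two inequalities $\lambda_{\max}\big(D(G^T G - I)D\big) \le \|D\|_2^2 \big[(1 + \sqrt{\rho})^2 (1 + \sqrt{\eta})^2 - 1\big]$ and $\lambda_{\max}\big(D(I - G^T G)D\big) \le \|D\|_2^2 \big[1 - (1 - \sqrt{c(\eta)\rho})^2\big]$, which rearrange to exactly $\gamma_1 \le \rho_1$ and $\gamma_d \ge \rho_2$, i.e.\ to $\mcE_S$.

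For a unit vector $z$ one has $(Dz)^T(G^T G - I)(Dz) = \|GDz\|_2^2 - \|Dz\|_2^2$, so the two quantities above are suprema of $\pm(\|Gy\|_2^2 - \|y\|_2^2)$ over the ellipsoid $\{Dz : \|z\|_2 \le 1\}$, which has radius $\|D\|_2$ and Gaussian width $\mathbb{E}\sup_{\|z\|_2 \le 1}\langle g, Dz\rangle = \mathbb{E}\|Dg\|_2 \le \|D\|_F = \sqrt{d_e} \le \sqrt{\rho m}$ --- a set of small effective dimension. The key step is to exploit this by splitting the diagonal of $D$ at an index $r$ chosen so that the ``head'' block $D_{\le r}$ already carries essentially all of $\|D\|_F^2 = d_e$ while $r \le \rho m$; the ``tail'' block then has small operator norm and still $\|D_{> r}\|_F \le \sqrt{d_e}$. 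On the head, the corresponding $r$ columns of $G$ form an $m \times r$ Gaussian matrix of aspect ratio $r/m \le \rho$, and I would invoke sharp Davidson--Szarek / Bai--Yin-type singular value estimates together with an $\varepsilon$-net of $S^{r-1}$ (of cardinality $e^{O(r)} = e^{O(\rho m)}$) to obtain $(1 - \sqrt{c(\eta)\rho})^2 \le \sigma_{\min}^2 \le \sigma_{\max}^2 \le (1 + \sqrt{\rho})^2 (1 + \sqrt{\eta})^2$ for that block; this is exactly where the factor $(1 \pm \sqrt{\rho})$ (from $r/m \le \rho$), the concentration factor $(1 \pm \sqrt{\eta})$, and the constant $c(\eta) = \big((1 + \sqrt{\eta})/(1 - \sqrt{\eta})\big)^2$ (from the net-to-sphere transfer for the smallest singular value) come from, and multiplying by $\|D_{\le r}\|_2^2 = \|D\|_2^2$ produces $\rho_1$ and $\rho_2$. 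For the tail I would bound $\sup_y |\|Gy\|_2^2 - \|y\|_2^2|$ over the tail ellipsoid through its Gaussian width $\le \sqrt{d_e}$ (by a chaining/Dudley estimate, or a Hanson--Wright-type deviation inequality), so that the tail contribution and the cross terms --- bounded by Cauchy--Schwarz --- are of lower order and absorbed into the slack. A union bound over the handful of failure events (the two singular-value tails for the head and the upper and lower tail estimates, each controlled with probability $1 - O(e^{-\eta^2 \rho m / 2})$ by Gaussian and $\chi^2$ tail bounds) then gives the factor $16$.

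The step I expect to be the main obstacle is controlling the tail without losing a logarithmic factor: a naive matrix-Bernstein or matrix-Chernoff treatment of the columns of $D_{> r}$ would force $m$ to exceed $d_e/\rho$ by a $\log m$ factor, so one really has to treat the ellipsoid $\{Dz : \|z\|_2 \le 1\}$ as a single object through its Gaussian width $\sqrt{d_e}$ and split $D$ at just the right place. The second delicate point is that $\rho_1$ and $\rho_2$ leave essentially no room for the cross terms, so one needs the \emph{sharp} rather than merely order-optimal forms of the Gaussian singular-value concentration inequalities and a careful calibration of the net radius against $\eta$; matching the stated constants precisely is the most technical part of the argument.
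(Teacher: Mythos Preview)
The paper does not prove this theorem at all: it is quoted verbatim from \cite{edasm} (see the sentence immediately preceding the theorem, ``For the Gaussian case, we have the following theorem introduced in \cite{edasm}''), and no argument is supplied in the present paper. So there is no ``paper's own proof'' to compare your proposal against.

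That said, your reduction is on the right track and matches the setup that any proof must begin with: writing $\bar U=\begin{pmatrix}UD\\ \sqrt{\lambda_0}\,V\bar\Sigma^{-1}\end{pmatrix}$ gives exactly $C_S=DG^TGD+(I-D^2)$ with $G=SU$ i.i.d.\ Gaussian, so $\gamma_1,\gamma_d$ are governed by $D(G^TG-I)D$ as you say. The head/tail splitting by effective dimension and the appeal to Davidson--Szarek for the head block is indeed the mechanism used in \cite{edasm}. Your caveats are also well placed: matching the exact constants $\rho_1,\rho_2$ and the prefactor $16$ requires the sharp (not order-optimal) concentration inequalities and a careful bookkeeping of the cross terms; a generic chaining bound on the tail ellipsoid would typically produce an unspecified absolute constant rather than land exactly on $(1-\sqrt{c(\eta)\rho})^2$. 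If you want to turn this sketch into a full proof you will need to consult \cite{edasm} for those precise estimates, since nothing in the present paper supplies them.
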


For the SJLT sketching, following Lemma 3.3 in \cite{rmihs}, we have an estimate on $\rho_1$ and $\rho_2$.

\begin{theorem}
Suppose that $S\in \mbR^{m\times n}$ is an SJLT sketching matrix with sparsity
\begin{equation}
s=\Omega(\log_\alpha (d_e/\delta)/\epsilon)
\end{equation}
in each column where $\alpha>2, \delta<1/2, \epsilon<1/2$. If the sketch size satisfies
$$
m = \Omega(\alpha d_e\log_\alpha (d_e/\delta)/\epsilon^2),
$$
then, with probability at least $1-\delta$, the event $\mcE_S$ holds. 
\end{theorem}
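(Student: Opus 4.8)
The plan is to reduce the theorem to a single spectral-approximation bound for the SJLT applied to the \emph{ridge-regularized} design, and then quote Lemma 3.3 of \cite{rmihs} for that bound. First I would introduce the augmented matrix $\bar A$ obtained by stacking $A$ on top of $\sqrt{\lambda_0}\,I$, so that $\bar A^T\bar A=A^TA+\lambda_0 I$, and set $Q=\bar A(A^TA+\lambda_0 I)^{-1/2}$; then $Q^TQ=I$, and I split $Q$ into its top block $Q_1=A(A^TA+\lambda_0 I)^{-1/2}\in\mbR^{n\times d}$ (the rows coming from $A$) and its bottom $d\times d$ block $Q_2$. Since only the $A$-block is sketched, a short computation using $Q_2^TQ_2=I-Q_1^TQ_1$ gives the identity
$$
(A^TA+\lambda_0 I)^{-1/2}(A^TS^TSA+\lambda_0 I)(A^TA+\lambda_0 I)^{-1/2}=I+E,\qquad E:=Q_1^TS^TSQ_1-Q_1^TQ_1.
$$
Hence $C_S$ is similar to $(I+E)^{\pm1}$, its extreme eigenvalues $\gamma_1\ge\gamma_d$ are controlled entirely by $\|E\|_2$, and any bound $\|E\|_2\le\theta$ turns directly into $\gamma_1\le\rho_1$ and $\gamma_d\ge\rho_2$ for the $\rho_1,\rho_2$ quoted in the theorem --- that is, into the event $\mcE_S$. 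So everything reduces to making $\|E\|_2$ small with probability at least $1-\delta$.

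Second, I would record the structural fact that makes the effective dimension the governing parameter. From the SVD $A=U_A\Sigma_A V_A^T$ one gets $Q_1=U_A D V_A^T$ with $D=\Sigma_A(\Sigma_A^2+\lambda_0 I)^{-1/2}$, so that $\|D\|_2^2=\sigma_{\max}(A)^2/(\sigma_{\max}(A)^2+\lambda_0)$ is exactly the matrix $D$ appearing in the Gaussian theorem, while
$$
\|Q_1\|_F^2=\|D\|_F^2=\mathrm{tr}(A^TA(A^TA+\lambda_0 I)^{-1})=d_e.
$$
Moreover $E=V_A\,D(U_A^TS^TSU_A-I)D\,V_A^T$, so $\|E\|_2=\|D(U_A^TS^TSU_A-I)D\|_2$ depends on $S$ only through its action on $\mathrm{col}(U_A)$ reweighted by $D$, whose relevant ``size'' is $d_e$ rather than $d$. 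At this point I would invoke Lemma 3.3 of \cite{rmihs}: for an SJLT with column sparsity $s=\Omega(\log_\alpha(d_e/\delta)/\epsilon)$ and sketch size $m=\Omega(\alpha d_e\log_\alpha(d_e/\delta)/\epsilon^2)$, with probability at least $1-\delta$ the quantity $\|E\|_2$ is at most $O(\epsilon)$ --- equivalently $A^TS^TSA+\lambda_0 I$ lies within a $1\pm O(\epsilon)$ factor of $A^TA+\lambda_0 I$ --- which is precisely the estimate on $\rho_1,\rho_2$ stated. Combining this with the first step, after rescaling $\epsilon$ by a universal constant to meet the exact targets, shows that $\mcE_S$ holds on this event.

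Within this argument the only genuine work is the two reductions above, since the probabilistic content is packaged in Lemma 3.3 of \cite{rmihs}; I therefore expect the bookkeeping of constants between ``$\|E\|_2\le\epsilon$'' and the explicit $\rho_1,\rho_2$ to be the fiddliest part of writing it out. The real obstacle behind the statement --- and the reason it needs a lemma rather than a one-line oblivious-subspace-embedding argument --- is that $m=\Theta(d_e\,\mathrm{polylog})$ is far smaller than $d$, so $S$ is \emph{not} a subspace embedding of the full $d$-dimensional space $\mathrm{col}(U_A)$ and $\|E\|_2$ cannot be controlled by $\|D\|_2^2\,\|U_A^TS^TSU_A-I\|_2$. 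One must instead use that $D$ is ``flat'' --- at most $O(d_e/\theta)$ of its squared diagonal entries exceed any $\theta>0$, by Markov's inequality applied to $\|D\|_F^2=d_e$ --- and feed this into a ridge-leverage-score / effective-dimension concentration argument bounding $\|D(U_A^TS^TSU_A-I)D\|_2$ with only $O(d_e\,\mathrm{polylog})$ rows; this is the technical core borrowed from \cite{rmihs}.
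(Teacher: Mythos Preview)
Your proposal is correct and matches the paper's approach: the paper does not give a standalone proof but simply states that the result ``follows from Lemma 3.3 in \cite{rmihs},'' which is exactly the reduction-plus-citation you outline. Two minor bookkeeping remarks: (i) with the paper's definition $d_e=\|D\|_F^2/\|D\|_2^2$ one has $\|Q_1\|_F^2=\|D\|_F^2=d_e\|D\|_2^2\le d_e$ rather than equality, which only helps; (ii) $C_S$ is directly similar to $I+E$ (no inverse), since $C_S=V(I+E)V^T$ --- the ``$\pm1$'' in your write-up is superfluous.
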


For the SRHT case, we introduce the relevant factor $$C(m,d_e)=\frac{16}{3}\pp{1+\sqrt{\frac{8\log(d_en)}{d_e}}}^2.
$$
The following theorem in \cite{edasm} provides a sharp estimate on $\rho_1$ and $\rho_2$.
\begin{theorem}
Suppose that $S\in \mbR^{m\times n}$ is an SRHT sketching matrix. Consider $d_e$ defined in \eqref{equ:de} and a parameter $\rho\in(0,1)$. If $m\geq C(n,d_e)\frac{d_e\log(d_e)}{\rho}$. Then, it holds with probability at least $9/d_e$ such that $\mcE_S$ hold with $\rho_1=1+\|D\|_2^2\rho$ and $\rho_2=1-\|D\|_2^2\rho$. 
\end{theorem}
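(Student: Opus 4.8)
The plan is to reduce the event $\mcE_S$ to a spectral-norm approximate matrix multiplication guarantee for one weighted matrix, and then to specialize the SRHT analysis of \cite{edasm} to that matrix. Write a thin SVD $A=U\Sigma V^T$ and let $D=\Sigma(\Sigma^2+\lambda_0 I)^{-1/2}$ be the diagonal matrix of ridge shrinkage factors used in Section~\ref{ssec:conv}. A direct computation gives
$$
C_S=(A^TA+\lambda_0 I)^{-1/2}(A^TS^TSA+\lambda_0 I)(A^TA+\lambda_0 I)^{-1/2}=V\big(DU^TS^TSU D+I-D^2\big)V^T,
$$
so, setting $W:=UD\in\mbR^{n\times d}$, the eigenvalues $\gamma_1,\dots,\gamma_d$ of $C_S$ are exactly those of $I+(W^TS^TSW-W^TW)$. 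Hence $\mcE_S$ — every $\gamma_j$ lying in $[\rho_2,\rho_1]=[1-\|D\|_2^2\rho,\,1+\|D\|_2^2\rho]$ — is, by Weyl's inequality, equivalent to the single estimate $\|W^TS^TSW-W^TW\|_2\le\rho\,\|D\|_2^2=\rho\,\|W\|_2^2$. Two features make this tractable: $\|W\|_2=\|D\|_2\le 1$, and, crucially, $\|W\|_F^2=\operatorname{tr}(D^2)=\sum_i\sigma_i^2/(\sigma_i^2+\lambda_0)=d_e$, so $W$ carries Frobenius mass exactly equal to the effective dimension.

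Next I would unpack the SRHT structure. Write $S=\sqrt{n/m}\,R\,H\,\operatorname{diag}(\varepsilon)$ with $R$ a uniform $m$-out-of-$n$ row sampler, $H$ the normalized Hadamard matrix, and $\varepsilon$ a Rademacher vector, and set $\widetilde W:=H\operatorname{diag}(\varepsilon)W$, which inherits $\|\widetilde W\|_F=\sqrt{d_e}$ and $\|\widetilde W\|_2=\|D\|_2$. The key preparatory estimate is the flattening (low-coherence) lemma for the randomized Hadamard transform: with high probability over $\varepsilon$, every row $\widetilde w_i$ of $\widetilde W$ satisfies
$$
\max_i\|\widetilde w_i\|_2^2\le\frac1n\big(\|W\|_F+\sqrt{8\log(d_e n)}\,\|W\|_2\big)^2\le\frac{d_e}{n}\Big(1+\sqrt{\tfrac{8\log(d_e n)}{d_e}}\Big)^2=\frac{3\,d_e}{16\,n}\,C(n,d_e),
$$
where the last step uses $\|W\|_2=\|D\|_2\le 1$; this is precisely where $C(n,d_e)$ enters. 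Conditioned on this event, $W^TS^TSW=\frac nm\sum_{t=1}^m\widetilde w_{i_t}\widetilde w_{i_t}^T$ is an average of $m$ independent rank-one positive semidefinite matrices with mean $\widetilde W^T\widetilde W=W^TW$ and uniform operator-norm bound $L:=\frac nm\max_i\|\widetilde w_i\|_2^2\le\frac{3 d_e C(n,d_e)}{16 m}$.

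The final step is a matrix Chernoff/Bernstein inequality with intrinsic dimension applied to this average. The mean $W^TW$ has intrinsic dimension $\operatorname{tr}(W^TW)/\|W^TW\|_2=d_e/\|D\|_2^2$, which is what yields the $\log d_e$ factor and a probability guarantee governed by $d_e$ rather than the crude $\log d$ coming from a union bound over $d$ coordinates. Feeding in the per-term bound $L\le 3 d_e C(n,d_e)/(16m)$ together with the mean's top eigenvalue $\|D\|_2^2$, and choosing the deviation parameter so that two-sided multiplicative control of the extreme eigenvalues of the average becomes the additive bound $\rho\|D\|_2^2$, one finds that once $m\ge C(n,d_e)\,d_e\log(d_e)/\rho$ the tail bound reaches the level stated in the theorem; on that event $\|W^TS^TSW-W^TW\|_2\le\rho\|D\|_2^2$, so by the reduction of the first paragraph $\mcE_S$ holds with probability at least $9/d_e$, as claimed. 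This chain is exactly the SRHT estimate of \cite{edasm} specialized to $C_S$, and I would invoke that reference for the sharp constants.

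The main obstacle is the constant bookkeeping in this last step: a direct application of matrix Bernstein already gives a qualitatively correct statement, but with worse numerical constants, $\log d$ instead of $\log d_e$, and an inferior probability; recovering the stated sharp form requires the refined coherence estimate (to pin down $C(n,d_e)$) together with the intrinsic-dimension version of the concentration inequality (to get the $d_e\log d_e/\rho$ threshold and the $d_e$-scale probability). Everything else — the identification of $C_S$ with $I+W^TS^TSW-W^TW$, the facts $\|W\|_2=\|D\|_2$ and $\|W\|_F^2=d_e$, and the passage from "$\|\cdot\|_2$ small" to "$\gamma_j\in[\rho_2,\rho_1]$" — is routine linear algebra.
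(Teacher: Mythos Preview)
The paper does not prove this theorem at all: it is simply quoted from \cite{edasm} (see the sentence preceding the statement). Your proposal is therefore not so much an alternative to the paper's proof as a sketch of the argument that \cite{edasm} carries out, and at that level your outline --- reduce $\mcE_S$ to a spectral-norm approximate-matrix-multiplication bound for $W=UD$, flatten row norms with the randomized Hadamard transform, then apply an intrinsic-dimension matrix Chernoff bound to the sampling stage --- is the right structure.

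There is one concrete slip worth fixing. You assert $\|W\|_F^2=\operatorname{tr}(D^2)=d_e$, but by definition \eqref{equ:de} one has $d_e=\|D\|_F^2/\|D\|_2^2$, so in fact $\|W\|_F^2=\|D\|_F^2=d_e\,\|D\|_2^2$. This error propagates: your flattening bound should read $\max_i\|\widetilde w_i\|_2^2\le \tfrac{\|D\|_2^2 d_e}{n}\big(1+\sqrt{8\log(d_e n)/d_e}\big)^2$, and the intrinsic dimension of $W^TW$ is $\operatorname{tr}(D^2)/\|D\|_2^2=d_e$, not $d_e/\|D\|_2^2$. Fortunately the extra $\|D\|_2^2$ factors cancel against $\|W^TW\|_2=\|D\|_2^2$ in the Chernoff step, so the final threshold $m\ge C(n,d_e)\,d_e\log(d_e)/\rho$ and the $d_e$-scale probability are unaffected --- but the intermediate expressions as written are off. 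Since you already plan to defer to \cite{edasm} for the sharp constants, and that is exactly what the paper does, the proposal is adequate once this bookkeeping is corrected.
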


\subsection{Convergence analysis}
\label{ssec:conv}
In this subsection, we analyze the convergence rate of IHS with binomial decomposition. We first introduce some notations. Suppose that $A=U\Sigma V^T$ is the singular value decomposition and let $\sigma_1\geq \dots\geq \sigma_d$ denote singular values of $A$. Let 
$\bar A = \bmbm{A\\\sqrt{\lambda_0}I},\quad \tilde A = \bmbm{A\\\sqrt{\lambda}I}$.
Let $\bar U$ be a matrix of left singular vectors of $\bar A$. For $\lambda_0\geq 0$, we introduce a diagonal matrix $D=\diag\pp{\frac{\sigma_1}{\sqrt{\sigma_1^2+\lambda_0}},\dots,\frac{\sigma_d}{\sqrt{\sigma_d^2+\lambda_0}} }$ and define the effective dimension by
\begin{equation}\label{equ:de}
d_e = \frac{\|D\|_F^2}{\|D\|_2^2}.
\end{equation}
This will influence the sketching dimension, which will be discussed in detail later. Define
$$
\begin{aligned}
\bar \Sigma&=\diag\pp{\sqrt{\sigma_1^2+\lambda_0},\dots,\sqrt{\sigma_d^2+\lambda_0}},\\
 \tilde \Sigma&=\diag\pp{\sqrt{\sigma_1^2+\lambda},\dots,\sqrt{\sigma_d^2+\lambda}}.
\end{aligned}
$$
Denote $\bar S = \bmbm{S&0\\0&I_d}.$ We define two matrices 
$$
\tilde C_S = \tilde \Sigma^{-1} \bar \Sigma C_S \bar \Sigma \tilde \Sigma^{-1},\quad C_S=\bar U^T\bar S^T\bar S\bar U.
$$
Denote $\gamma_1,\gamma_d$ to be the largest/smallest eigenvalue of $C_S$. For two real numbers $\rho_1>\rho_2>0$, we define the $S$-measurable event $\mcE_S=\{\rho_2\leq \gamma_d\leq \gamma_1\leq \rho_1\}.$
We evaluate the error by $\delta_k = \frac{1}{2}\|\tilde A(x_k-x^*)\|^2.$
\begin{theorem}\label{thm:converge}
Suppose that we solve \eqref{least-square} for $\lambda\in[\lambda_\text{min},\lambda_\text{max}]$. 
Denote $\tilde \kappa=\frac{\rho_1\lambda_\text{max}}{\rho_2\lambda_\text{min}}$. By setting $\lambda_0=\sqrt{\lambda_\text{max}\lambda_\text{min}}$ and taking $\alpha=2\sqrt{\rho_1^{-1}\rho_2^{-1}}/(\sqrt{\tilde \kappa}+\sqrt{\tilde \kappa^{-1}})$, then, conditioned on the event $\mcE_S$, the IHS-BIN satisfies that at each iteration
$$
\frac{\delta_{k+1}}{\delta_k}\leq \pp{ \frac{ \tilde \kappa-1}{ \tilde \kappa+1}}^2.
$$
\end{theorem}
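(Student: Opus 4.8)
The plan is to reduce the one-step error ratio $\delta_{k+1}/\delta_k$ to the squared spectral norm of a fixed symmetric positive definite matrix, diagonalize that matrix through the SVD of $A$ so that its spectrum is controlled by $C_S$ together with a diagonal corrector, and then choose the step size so as to balance the extreme eigenvalues. To begin, the ridge optimum $x^*$ solves $(A^TA+\lambda I)x^*=A^Tb$, so it is the fixed point of the IHS map $x\mapsto Mx+\tau P_S A^Tb$ with $M=I-\tau P_S(A^TA+\lambda I)$; hence $x_{k+1}-x^*=M(x_k-x^*)$, and since the IHS-BIN iterate $x_k(\lambda)$ is, by the Proposition, precisely the $k$-th IHS iterate, this recursion also governs IHS-BIN. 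Writing $G:=\tilde A^T\tilde A=A^TA+\lambda I$ and $e_k:=G^{1/2}(x_k-x^*)$, we get $\delta_k=\tfrac12\|e_k\|_2^2$ and $e_{k+1}=G^{1/2}MG^{-1/2}e_k=(I-\tau W)e_k$ with $W:=G^{1/2}P_SG^{1/2}\succ 0$, so that $\delta_{k+1}/\delta_k\le\|I-\tau W\|_2^2$; everything then comes down to locating $\mathrm{spec}(W)$ and picking $\tau$ optimally.

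Next I would diagonalize $W$. Using $A=U\Sigma V^T$ and the identity $C_S=I-D^2+DU^TS^TSUD$ (with $\bar\Sigma D=\Sigma$), a short computation gives $A^TS^TSA+\lambda_0 I=V\bar\Sigma C_S\bar\Sigma V^T$, hence $P_S=V\bar\Sigma^{-1}C_S^{-1}\bar\Sigma^{-1}V^T$; combining this with $G=V\tilde\Sigma^2 V^T$ yields $W=V\tilde C_S^{-1}V^T$, so $W$ is orthogonally similar (via $V$) to $\tilde C_S^{-1}$, where $\tilde C_S=R\,C_S\,R$ and $R:=\tilde\Sigma^{-1}\bar\Sigma$ is the diagonal matrix with $R_{ii}=\sqrt{(\sigma_i^2+\lambda_0)/(\sigma_i^2+\lambda)}$. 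Therefore $\mathrm{spec}(W)=\{\mu^{-1}:\mu\in\mathrm{spec}(\tilde C_S)\}$, and it remains only to bound the eigenvalues of $\tilde C_S$.

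Conditioned on $\mcE_S$ we have $\rho_2 I\preceq C_S\preceq\rho_1 I$, hence $\rho_2 R^2\preceq\tilde C_S\preceq\rho_1 R^2$, so the eigenvalues of $\tilde C_S$ lie in $[\rho_2\min_i R^2_{ii},\,\rho_1\max_i R^2_{ii}]$. Each $R^2_{ii}=(\sigma_i^2+\lambda_0)/(\sigma_i^2+\lambda)$ is of the form $(t+\lambda_0)/(t+\lambda)$ with $t\ge 0$, hence lies between $\min(1,\lambda_0/\lambda)$ and $\max(1,\lambda_0/\lambda)$; substituting $\lambda_0=\sqrt{\lambda_\text{max}\lambda_\text{min}}$ and using $\lambda\in[\lambda_\text{min},\lambda_\text{max}]$ gives the symmetric estimate $\sqrt{\lambda_\text{min}/\lambda_\text{max}}\le R^2_{ii}\le\sqrt{\lambda_\text{max}/\lambda_\text{min}}$, the geometric mean being exactly the value of $\lambda_0$ that balances the two endpoints $\lambda_\text{min},\lambda_\text{max}$. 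Consequently $\mathrm{spec}(W)\subseteq[\nu_2,\nu_1]$ with $\nu_1=\rho_2^{-1}\sqrt{\lambda_\text{max}/\lambda_\text{min}}$ and $\nu_2=\rho_1^{-1}\sqrt{\lambda_\text{min}/\lambda_\text{max}}$, so that $\nu_1/\nu_2=\tilde\kappa$ and $\nu_1\nu_2=(\rho_1\rho_2)^{-1}$. For a symmetric $W$ with spectrum in $[\nu_2,\nu_1]$ one has $\|I-\tau W\|_2=\max_{\mu\in\mathrm{spec}(W)}|1-\tau\mu|\le\max\{|1-\tau\nu_1|,|1-\tau\nu_2|\}$, which is minimized at $\tau=2/(\nu_1+\nu_2)$ with minimal value $(\nu_1-\nu_2)/(\nu_1+\nu_2)=(\tilde\kappa-1)/(\tilde\kappa+1)$; a short rearrangement of $2/(\nu_1+\nu_2)$ using $\nu_1/\nu_2=\tilde\kappa$ and $\nu_1\nu_2=(\rho_1\rho_2)^{-1}$ identifies this minimizing step size with the $\alpha$ in the statement. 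Squaring the spectral-norm bound gives $\delta_{k+1}/\delta_k\le\big((\tilde\kappa-1)/(\tilde\kappa+1)\big)^2$.

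The main obstacle is the passage from $W$ to $\tilde C_S^{-1}$ together with the uniform control of $R^2$: one must observe that $A^TA+\lambda I$, $A^TA+\lambda_0 I$, and $A^TS^TSA+\lambda_0 I$ are simultaneously diagonalized in the basis $V$ — this is what makes $\mathrm{spec}(W)$ equal to $\mathrm{spec}(\tilde C_S^{-1})$ — and then one must pinch $R^2_{ii}$ into $[\sqrt{\lambda_\text{min}/\lambda_\text{max}},\sqrt{\lambda_\text{max}/\lambda_\text{min}}]$ simultaneously over all singular values of $A$ \emph{and} all $\lambda\in[\lambda_\text{min},\lambda_\text{max}]$, which is precisely what the geometric-mean choice of $\lambda_0$ secures and what allows a single step size $\alpha$ to deliver the stated contraction for every $\lambda$ along the path. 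Once these two points are in hand, the remainder is the textbook fixed-step-size contraction analysis of a symmetric positive definite iteration, so I do not expect further difficulty.
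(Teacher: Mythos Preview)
Your proof is correct and follows essentially the same route as the paper: reduce the one-step contraction to $\|I-\alpha\,\tilde C_S^{-1}\|_2$, sandwich the eigenvalues of $\tilde C_S$ via $\rho_2 R^2\preceq\tilde C_S\preceq\rho_1 R^2$ with $R_{ii}^2=(\sigma_i^2+\lambda_0)/(\sigma_i^2+\lambda)$, use the geometric-mean choice $\lambda_0=\sqrt{\lambda_{\min}\lambda_{\max}}$ to pinch $R_{ii}^2$ into $[\sqrt{\lambda_{\min}/\lambda_{\max}},\sqrt{\lambda_{\max}/\lambda_{\min}}]$, and then pick the optimal step size. One wording caveat: in your closing paragraph you say $A^TS^TSA+\lambda_0 I$ is ``simultaneously diagonalized in the basis $V$'', but this matrix equals $V\bar\Sigma C_S\bar\Sigma V^T$ with $C_S$ generically non-diagonal, so it is \emph{not} diagonalized by $V$; the correct (and sufficient) reason that $\mathrm{spec}(W)=\mathrm{spec}(\tilde C_S^{-1})$ is the orthogonal similarity $W=V\tilde C_S^{-1}V^T$ you already established, so the slip is harmless to the argument.
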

\begin{remark}
If we can estimate the smallest singular value $\sigma_d$ of $A$, we can refine the convergence rate as follows. Denote $\hat \kappa=\frac{\rho_1(\lambda_\text{max}+\sigma_d^2)}{\rho_2(\lambda_\text{min}+\sigma_d^2)}$.
Assume that we set $\lambda_0=\sqrt{(\lambda_\text{max}+\sigma_d^2)(\lambda_\text{min}+\sigma_d^2)}-\sigma_d^2$ and take a constant step size $\alpha=2\sqrt{\rho_1^{-1}\rho_2^{-1}}/(\sqrt{\hat \kappa}+\sqrt{\hat \kappa^{-1}})$. Then, conditioned on the event $\mcE_S$, the IHS-BIN satisfies that at each iteration
$$
\frac{\delta_{k+1}}{\delta_k}\leq \pp{ \frac{\hat \kappa-1}{ \hat \kappa+1}}^2.
$$
\end{remark}


We also note that the convergence only depends on the event $\mcE_S$. In other words, as long as the event $\mcE_S$ is satified, we have the convergence guarantee for solving every ridge regression problem along the entire regularization path.

Denote $\beta=\lambda_\text{max}\lambda_\text{min}^{-1}$. Thus, to reach an $\epsilon$ precision solution, it takes $k=O(\log(1/\epsilon)\tilde \kappa)=O(\log(1/\epsilon)\beta)$ iterations. When $\beta$ is large, the iteration number $k$ can be large, which may not be efficient. 

To deal with this problem, we split $[\lambda_\text{min},\lambda_\text{max}]$ into $L$ smaller intervals 
$$
[\lambda^{(0)},\lambda^{(1)}], [\lambda^{(1)},\lambda^{(2)}],\dots, [\lambda^{(L-1)},\lambda^{(L)}].
$$
Here we let $\lambda^{(i)} = \lambda_\text{min}\beta^{i/L}.$ For each small interval $[\lambda^{(i)},\lambda^{(i+1)}]$, it takes approximately $k=\mcO(\log(1/\epsilon)\beta^{1/L})$ iterations to reach an $\epsilon$ precision solution. Compared to the computation cost in computing basis $\tilde u_j$, the computation cost of computing $SA$ and performing SVD of $SA$ is negligble. Hence, the major computation cost follows
$$
\begin{aligned}
&
\underbrace{O(L(md+nd)\log(1/\epsilon)^2\beta^{2/L})}_{\text{compute } \tilde u_j \; L\text{ times}}+\underbrace{O(Td\log(1/\epsilon)\beta^{1/L})}_{\text{evaluate }x_k}.\\
\end{aligned}
$$
Suppose that we take $L=\lfloor 2 \log \beta \rfloor$. Then, the computation cost writes
$$
\begin{aligned}
&
\underbrace{O((md+nd)\log \beta  \log(1/\epsilon)^2)}_{\text{compute } \tilde u_j \; L\text{ times}}+\underbrace{O(Td\log(1/\epsilon))}_{\text{evaluate }x_k}.\\
\end{aligned}
$$
Similarly, if $A$ is a sparse matrix, then the computation cost follows
$$
\begin{aligned}
&
\underbrace{O((md+\text{nnz}(A))\log \beta  \log(1/\epsilon)^2)}_{\text{compute } \tilde u_j \; L\text{ times}}+\underbrace{O(Td\log(1/\epsilon))}_{\text{evaluate }x_k}.\\
\end{aligned}
$$
Namely, we can improve the dependence of $\beta$ in computing binomial basis $\tilde u_j$ from $\beta^2$ to $\log \beta$. 

\subsection{Proof of Theorem \ref{thm:converge}}
For the update rule, we have
$$
x_{k+1} = x_k-\alpha\pp{\bar A^T\bar S^T\bar S\bar A}^{-1}\tilde A^T(\tilde A x_k-\bar b).
$$
We use the notation $e_t=\tilde U^T\tilde A(x_k-x^*)$. Here $x^*$ is the unique minimizer of \begin{equation}
\min_{x\in \mbR^d} \frac{1}{2}\|Ax-b\|_2^2+\frac{\lambda}{2}\|x\|_2^2.
\end{equation}
We obtain that
$$
\begin{aligned}
e_{k+1} =& e_k-\alpha \tilde U^T \tilde A \pp{\bar A^T\bar S^T\bar S\bar A}^{-1}\tilde A^T\tilde U e_k\\
=&\pp{I-\alpha \tilde \Sigma (\bar \Sigma\bar U^T\bar S^T\bar S\bar U\bar \Sigma )^{-1}\tilde \Sigma}e_k\\
=&(I-\alpha \tilde C_S^{-1})e_k.
\end{aligned}
$$
We note that
$$
\begin{aligned}
\sqrt{\delta_{k+1}}=&\|e_{k+1}\|_2\leq \|I-\alpha \tilde C_S^{-1}\|_2 \|e_k\|_2\\
=&\|I-\alpha \tilde C_S^{-1}\|_2 \sqrt{\delta_k}.
\end{aligned}
$$
Then, the convergence rate depend on the condition number of $\tilde C_S$. For the rest of the proof, we assume that the event $\mcE_S$ holds. Based on the estimations $\rho_1,\rho_2$ for the extreme eigenvalues of $C_S$, we define $\tilde \rho_1(\lambda)>\tilde \rho_2(\lambda)>0$ as follows:
if $\lambda\geq \lambda_0$, we let
$$
\tilde \rho_2(\lambda) = \frac{\sigma_d^2+\lambda_0}{\sigma_d^2+\lambda}\rho_2,\quad \tilde \rho_1(\lambda) = \frac{\sigma_1^2+\lambda_0}{\sigma_1^2+\lambda}\rho_1.
$$
Otherwise, if $\lambda\leq \lambda_0$, we let
$$
\tilde \rho_2(\lambda) = \frac{\sigma_1^2+\lambda_0}{\sigma_1^2+\lambda}\rho_2,\quad \tilde \rho_1(\lambda) = \frac{\sigma_d^2+\lambda_0}{\sigma_d^2+\lambda}\rho_1.
$$ 
Hence, the extreme eigenvalues of $\tilde C_S$ is bounded in $[\tilde \rho_2(\lambda), \tilde \rho_1(\lambda)]$. Hence, the convergence rate follows
$$
\sqrt{\frac{\delta_{k+1}}{\delta_k}}\leq\max\{\vert1-\tilde \alpha \rho_1(\lambda)^{-1}\vert,\vert 1-\alpha \tilde \rho_2(\lambda)^{-1}\vert\}.
$$
For $\lambda\in[\lambda_{\text{min}},\lambda_{\text{max}}]$, we want to minimize the worst convergence rate:
$$
\min_{\alpha>0,\lambda_0\in [\lambda_\text{min},\lambda_{\text{max}}]}\max_{\lambda\in[\lambda_\text{min},\lambda_{\text{max}}]}\max\{\vert1- \alpha\tilde \rho_1(\lambda)^{-1}\vert,\vert1-\alpha \tilde \rho_2(\lambda)^{-1}\vert\}.
$$
For $\lambda\geq \lambda_0$, we have 
$$
\tilde \rho_2(\lambda) = \frac{\sigma_d^2+\lambda_0}{\sigma_d^2+\lambda}\rho_2,\quad \tilde \rho_1(\lambda) = \frac{\sigma_1^2+\lambda_0}{\sigma_1^2+\lambda}\rho_1,
$$
which yields
$$
\tilde \rho_2(\lambda_\text{max})\leq \tilde \rho_2(\lambda)\leq \tilde \rho_1(\lambda)\leq \tilde \rho_1(\lambda_\text{max}).
$$
This indicates that
$$
\begin{aligned}
&\max_{\lambda\in[ \lambda_0,\lambda_\text{max}]}\max\{\vert1- \alpha\tilde \rho_1(\lambda)^{-1}\vert,\vert1-\alpha \tilde \rho_2(\lambda)^{-1}\vert\}\\
=&\max\{\vert1- \alpha\tilde \rho_1(\lambda_\text{max})^{-1}\vert,\vert1-\alpha \tilde \rho_2(\lambda_\text{max})^{-1}\vert\}.
\end{aligned}
$$
For $\lambda\leq \lambda_0$, similarly, we have
$$
\tilde \rho_2(\lambda) = \frac{\sigma_1^2+\lambda_0}{\sigma_1^2+\lambda}\rho_2,\quad \tilde \rho_1(\lambda) = \frac{\sigma_d^2+\lambda_0}{\sigma_d^2+\lambda}\rho_1.
$$
This indicates that
$$
\begin{aligned}
&\max_{\lambda\in[ \lambda_\text{min},\lambda_0]}\max\{\vert1- \alpha\tilde \rho_1(\lambda)^{-1}\vert,\vert1-\alpha \tilde \rho_2(\lambda)^{-1}\vert\}\\
=&\max\{\vert1- \alpha\tilde \rho_1(\lambda_\text{min})^{-1}\vert,\vert1-\alpha \tilde \rho_2(\lambda_\text{min})^{-1}\vert\}.
\end{aligned}
$$
We further notice that
$$
\tilde \rho_2(\lambda_\text{max})\leq \rho_2\leq \tilde \rho_2(\lambda_\text{min}),\quad \tilde \rho_1(\lambda_\text{max})\leq \rho_1\leq \tilde \rho_1(\lambda_\text{min}). 
$$
In short, we have
$$
\begin{aligned}
&\max_{\lambda\in[ \lambda_\text{min},\lambda_\text{max}]} \max\{\vert1- \alpha\tilde \rho_1(\lambda)^{-1}\vert,\vert1-\alpha \tilde \rho_2(\lambda)^{-1}\vert\}\\
=&\max\left\{\left\vert1- \tilde \rho_2(\lambda_\text{max})^{-1}\alpha \right\vert ,\left\vert1-\tilde \rho_1(\lambda_\text{min})^{-1}\alpha \right\vert\right\}\\
=&\max\left\{\left\vert1- \frac{\sigma_d^2+\lambda_\text{max}}{\sigma_d^2+\lambda_0}\rho_2^{-1}\alpha \right\vert ,\left\vert1- \frac{\sigma_d^2+\lambda_\text{min}}{\sigma_d^2+\lambda_0}\rho_1^{-1}\alpha \right\vert\right\}.
\end{aligned}
$$
The above quantity is minimized when 
$$
\alpha/(\lambda_0+\sigma_d^2)=2\pp{(\sigma_d^2+\lambda_\text{min})\rho_1^{-1}+(\sigma_d^2+\lambda_\text{max})\rho_2^{-1}}^{-1}.
$$
Thus, by taking $\lambda_0=\sqrt{(\lambda_\text{max}+\sigma_d^2)(\lambda_\text{min}+\sigma_d^2)}-\sigma_d^2$, the optimal step size follows $\alpha=2\sqrt{\rho_1^{-1}\rho_2^{-1}}/\pp{\sqrt{\kappa}+\sqrt{\kappa^{-1}}}$. In summary, we have
$$
\begin{aligned}
\sqrt{\frac{\delta_{k+1}}{\delta_k}}\leq&
\min_{\alpha>0,\lambda_0\in [\lambda_\text{min},\lambda_{\text{max}}]}\max_{\lambda\in[\lambda_\text{min},\lambda_{\text{max}}]}\\
&\max\{\vert1- \alpha\tilde \rho_1(\lambda)^{-1}\vert,\vert1-\alpha \tilde \rho_2(\lambda)^{-1}\vert\}\\
=& \frac{\kappa-1}{\kappa+1}.
\end{aligned}
$$
For the case where $\sigma_d$ is unknown, since 
$$
\frac{\lambda_\text{max}}{\lambda_0}\rho_2^{-1}\geq \frac{\sigma_d^2+\lambda_\text{max}}{\sigma_d^2+\lambda_0}\rho_2^{-1}\geq \frac{\sigma_d^2+\lambda_\text{min}}{\sigma_d^2+\lambda_0}\rho_1^{-1}\geq \frac{\lambda_\text{min}}{\lambda_0}\rho_1^{-1},
$$
we can relax the bound as follows
$$
\begin{aligned}
&\max\left\{\left\vert1- \frac{\sigma_d^2+\lambda_\text{max}}{\sigma_d^2+\lambda_0}\rho_2^{-1}\alpha \right\vert ,\left\vert1- \frac{\sigma_d^2+\lambda_\text{min}}{\sigma_d^2+\lambda_0}\rho_1^{-1}\alpha \right\vert\right\}\\
\leq &\max\left\{\left\vert1- \frac{\lambda_\text{max}}{\lambda_0}\rho_2^{-1}\alpha \right\vert ,\left\vert1- \frac{\lambda_\text{min}}{\lambda_0}\rho_1^{-1}\alpha \right\vert\right\}.
\end{aligned}
$$
Similarly, the above quantity is minimized when $\alpha/(\lambda_0)=2\pp{\lambda_\text{min}\rho_1^{-1}+\lambda_\text{max}\rho_2^{-1}}^{-1}$. For $\lambda_0=\sqrt{\lambda_\text{min}\lambda_\text{max}}$, the optimal step size writes $\alpha=2\sqrt{\rho_1^{-1}\rho_2^{-1}}/\pp{\sqrt{\tilde \kappa}+\sqrt{\tilde \kappa^{-1}}}$. We then have
$$
\begin{aligned}
&\sqrt{\frac{\delta_{k+1}}{\delta_k}}\leq \min_{\alpha>0,\lambda_0\in [\lambda_\text{min},\lambda_{\text{max}}]}\max_{\lambda\in[\lambda_\text{min},\lambda_{\text{max}}]}\\
&\max\{\vert1- \alpha\tilde \rho_1(\lambda)^{-1}\vert,\vert1-\alpha \tilde \rho_2(\lambda)^{-1}\vert\}\\
=& \frac{\tilde \kappa-1}{\tilde \kappa+1}.
\end{aligned}
$$

\section{Estimation of the effective dimension and extensions of IHS-BIN}\label{sec:extension}

\subsection{Estimation of the effective dimension}
From previous theorems, an appropriate sketching size depends on the effective dimension $d_e$. Nevertheless, in practice, usually the estimation of $d_e$ is available when $d_e$ is small, see \cite{sbfrd}. Following the adaptive method described in \cite{edasm}, we propose a practical method for finding an appropriate sketching size. 

We apply IHS to solve \eqref{least-square} with $\lambda=\lambda_\text{min}$. In $k$-th iteration, we first calculate the direction
$$
d_k = (A^TS^TSA+\lambda_\text{min} I)^{-1}A^T(Ax_k-b+\lambda_\text{min} x_k).
$$
Then, we calculate a step size $\tau_k=\gamma_1^{j}$ satisfying the Armijo condition. Namely, $j$ is the smallest non-negative integer satisfying
\begin{equation}\label{armijo}
    \begin{aligned}
&f(x_k-\gamma_1^jd_k)\leq f(x_k)-\gamma_2\gamma_1^jd_k^T\nabla f(x_k).
\end{aligned}
\end{equation}
Here we write $f(x)=\frac{1}{2}\|Ax-b\|_2^2+\frac{\lambda_\text{min}}{2}\|x\|_2^2$ and $\gamma_1,\gamma_2\in (0,1)$ are parameters. Then, we update $x_{k+1}=x_k-\tau_kd_k$. 

We evaluate the following quantity per iteration
$$
\tilde \delta_k = d_k^TA^T(Ax_k-b+\lambda_\text{min} x_k).
$$
If $\tilde \delta_{k+1}\geq \gamma_3 \delta_k$ for some $\gamma_3>0$, then, we let $m=2m$ and sample the sketching matrix. We stop the algorithm when $\tilde \delta_k<\epsilon$ for some $\epsilon>0$. The whole algorithm is described in Algorithm \ref{alg:sketch}.

\begin{algorithm2e}[ht]
\caption{Adaptive estimation of sketching dimension}\label{alg:sketch}
\KwIn{ $A,b,x_0,\lambda_\text{min}$, $\epsilon$, $\gamma_1, \gamma_2, \gamma_3$.}
 Set $k=0$. Compute $d_0$ and $\tilde \delta_0$\;
\While{$\tilde \delta_k>\epsilon$}
{Calculate a step size $\tau_k=\gamma_1^{j}$ satisfying the Armijo condition \eqref{armijo}\;
Update $x_{k+1}=x_k-\tau_kd_k$\;
Calculate $d_{k+1}$ and $\tilde \delta_{k+1}$\;}
\eIf{$\tilde \delta_{k+1}\geq \gamma_3 \delta_k$}
{Set $m=2m$ and sample the sketching matrix $S$\;
 Recompute $d_k$ based on $S$\;}
{Set $k=k+1$;}
\KwOut{$m$}
\end{algorithm2e}

\subsection{Extension to the under-determined case}
For the under-determined case where $n<d$, we consider the dual problem of \eqref{least-square}:
\begin{equation}\label{ls_reg:d}
\min_{z\in \mbR^m} \frac{1}{2}\|A^Tz\|^2+\frac{\lambda}{2}\|z\|^2-b^Tz.
\end{equation}
The optimal solution $z$ to the dual problem is related to the optimal solution to the primal problem by
$$
v=A^Tz.
$$
Hence, we can apply similar methods in the under-parameterized case to solve \eqref{ls_reg:d}.

\subsection{Extension for matrix-valued ridge-regression}

Consider the following matrix-valued ridge-regression problem:
\begin{equation}\label{least-square-max}
\min_{x\in \mbR^{d\times K}} \frac{1}{2}\|AX-B\|_F^2+\frac{\lambda}{2}\|X\|_F^2,
\end{equation}
where $A\in \mbR^{n\times d}$ and $B\in \mbR^{n\times K}$. We can easily extend IHS-BIN for solving this problem.

\begin{algorithm2e}[ht]
\caption{Calculation of basis $\tilde u_0,\dots,\tilde u_{k-1}$ for IHS-BIN with matrix-valued ridge regression.}\label{alg:basis_mat}
\KwIn{$A,B,P_S,\tau,k$.}
 Set $U_i=0$ and $\tilde U_i=0$ with $i=0,\dots,k-1$\;
 Calculate $U_0=-P_S A^TB$\;
 Let $\tilde U_0=\tilde U_0+U_0$\;
\For{$i=1$ to $k-1$}{
 Calculate $U_i=-P_S U_{i-1}$\;
\For{$j=i-1$ to $1$}{
 Calculate $U_j=U_j-P_S(\tau A^TA U_j+U_{j-1})$\;
}
 Calculate $U_0=U_0-P_S\tau A^TA U_0$\;
 Update $\tilde U_j=\tilde U_j+U_j$ for $j=0,\dots,i$\;
}
\KwOut{ $\{\tilde U_j\}_{j=0}^{k-1}$}
\end{algorithm2e}

\begin{algorithm2e}[ht]
\caption{IHS-BIN with matrix-valued ridge regression.}\label{alg:ihs-bin-mat}
\KwIn{ $A,b,\Lambda=\{\lambda_i\}_{i=1}^T$, iteration number $k$.}
Generate the sketching matrix $S$ and compute the SVD of $SA$\;
\For{$i=1,\dots,T$}{
     Compute $\{\tilde U_j\}_{j=0}^{k-1}$ using Algorithm \ref{alg:basis_mat}\;
     Compose $X_i=\tau \sum_{j=0}^{k-1}(\tau\lambda_i)^j\tilde U_j$\;
}
\KwOut{$\{X_i\}_{i=1}^T$}
\end{algorithm2e}

Neglecting the computation cost of computing $SA$, the computation cost of IHS-BIN becomes
$$
\begin{aligned}
&
\underbrace{O(m^2d)}_{\text{SVD of } SA}+\underbrace{O(K(md+nd)\log \beta  \log(1/\epsilon)^2)}_{\text{compute } \tilde u_j \; L\text{ times}}+\underbrace{O(TKd\log(1/\epsilon))}_{\text{evaluate }x_k},
\end{aligned}
$$
for a dense matrix $A$, or
$$
\begin{aligned}
&
\underbrace{O(m^2d)}_{\text{SVD of } SA}+\underbrace{O(K(md+\text{nnz}(A))\log \beta  \log(1/\epsilon)^2)}_{\text{compute } \tilde u_j \; L\text{ times}}
+\underbrace{O(TKd\log(1/\epsilon))}_{\text{evaluate }x_k},
\end{aligned}
$$
for a sparse matrix $A$. For comparison, the computation cost of SVD-based method follows
$$
\underbrace{O(n^2d) }_{\text{SVD}}+O(d^2KT).
$$
The computation cost of CG-based method writes
$$
O(TKnd\sqrt{\kappa}\log(1/\epsilon) )\text{  or  }O(TK\text{nnz}(A)\sqrt{\kappa}\log(1/\epsilon) ).
$$

\section{Numerical results}\label{sec:num}
In this section, we present numerical comparisons between IHS-BIN and other methods for solving least-square problems with various regularization parameters. We test over randomly generated data and real data. For real data, we collect datasets from LIBSVM\footnote{https://www.csie.ntu.edu.tw/~cjlin/libsvm/} under the modified BSD license. We randomly split half of the data matrix as the training data $A$ and the other as the test data matrix $\tilde A$. We denote $b$ and $\tilde b$ as the corresponding labels of $A$ and $\tilde A$. All numerical experiments are conducted on a Dell PowerEdge R840 workstation (64 core, 3TB ram). Detailed setups for the datasets and algorithms are given in the appendix. We provide numerical comparisons with the following baseline algorithms. \textbf{SVD:} the Singular Value Decomposition based method., \textbf{native:} the native linear system solver in NumPy,  \textbf{CG:} warm-started Conjugate Gradient method. For IHS-BIN, we use the SJLT sketching matrices with sparsity $1$. The code can be found in \url{https://github.com/pilancilab/IHS-BIN}.

\subsection{Datasets setup in the numerical experiments}
For the randomly generated data, each row of $A$ follows $\mcN(0,\Sigma^2/\sqrt{nd})$, where $\Sigma_{i,j}=\alpha^{|i-j|}$. Here we let $\alpha=0.99$. 
The training loss and the test loss follows
$$
L_\text{train}=\frac{1}{2}\|Av-b\|^2_2+\frac{\lambda}{2}\|v\|^2,\quad L_\text{test}=\frac{1}{2}\|\tilde Av-\tilde b\|^2_2.
$$
Here each row of $\tilde A$ follows $\mcN(0,\Sigma^2/(nd))$. And we let
$$
b=Av^*+\eta,\quad \tilde b=\tilde Av^*+\tilde \eta.
$$
Here $v^*\sim \mcN(0,I_d/d)$ and $\eta,\tilde \eta\sim \mcN(0,\sigma^2)$ where $\sigma>0$ is a parameter. 

For real data, we linearly rescale the entry of $A$ into $[-1,1]$. Besides, for CIFAR10, we use the kernel matrix in ridge regression. Namely, $A$ is the kernel matrix formulated by the training data and $\tilde A$ is the kernel matrix formulated by the training data and test data. In short, we have
$$
A_{i,j} = k(f_i,f_j), \tilde A_{i,j}=k(\tilde f_i, f_j),
$$
where $k(x,y):\mbR^d\times\mbR^d\to\mbR$ is a positive definite kernel function, $f_i$ is the feature vector of $i$-th training sample and $\tilde f_j$ is the feature vector of $j$-th test sample. Here we use an isotropic Gaussian kernel function
$$
k(x,y)=(2\pi h)^{-d/2} \exp(-\frac{1}{2h}\|x-y\|_2^2),
$$
with bandwidth $h=1000$.

For IHS-BIN, we split the interval $[\lambda_\text{min},\lambda_\text{max}]$ into $L=\lfloor 2\log(\lambda_\text{max}/\lambda_\text{min})\rfloor$ small intervals.
\subsection{Over-determined case}

We present numerical results on randomly generated data and real data in Figure \ref{fig:random_od}, \ref{fig:realsim} and \ref{fig:avazu}. In Figure \ref{fig:cifar}, we present results on matrix-valued ridge regression with kernel matrix. For the problem with medium-size $d$ (randomly generated data), we plot the eigenvalues of $A^TA$. The curves of train loss and test loss from IHS-BIN overlap with curves from other solvers. This indicates that IHS-BIN yields correct solutions along the regularization path. For medium-scale problems like randomly generated data and real-sim, CG and IHS-BIN outperform SVD and native. For the large-scale problem avazu and matrix-valued ridge regression, IHS-BIN can be significantly faster than CG.

\begin{figure}[!htbp]
\centering
\begin{minipage}[t]{\figsize\textwidth}
\centering
\includegraphics[width=\linewidth]{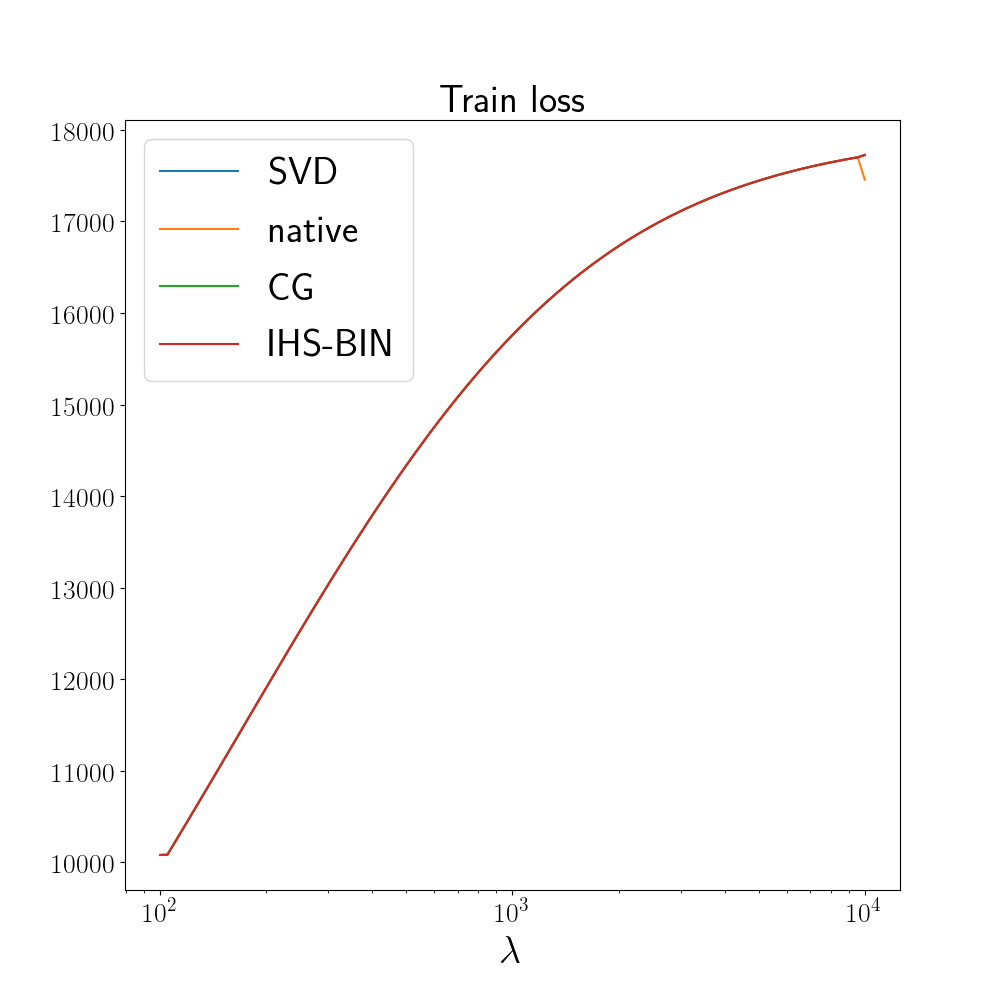}
\end{minipage}
\begin{minipage}[t]{\figsize\textwidth}
\centering
\includegraphics[width=\linewidth]{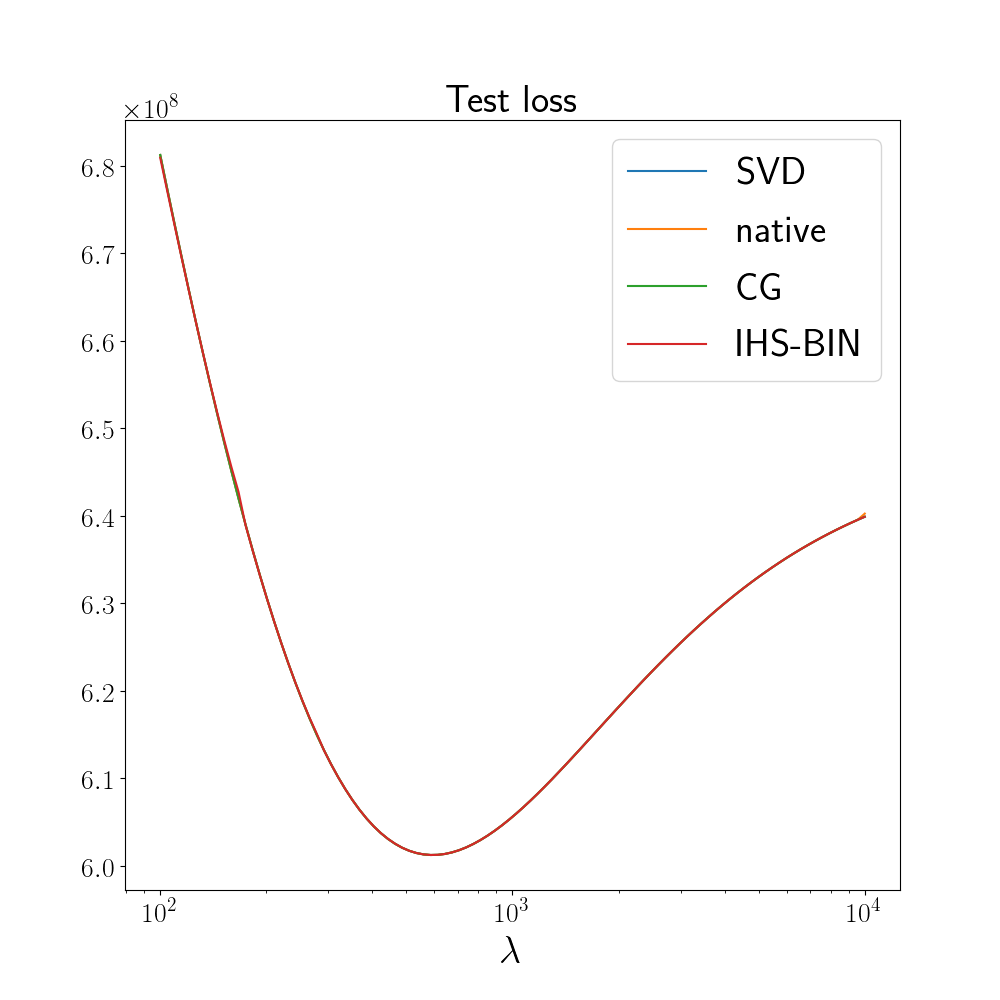}
\end{minipage}
\begin{minipage}[t]{\figsize\textwidth}
\centering
\includegraphics[width=\linewidth]{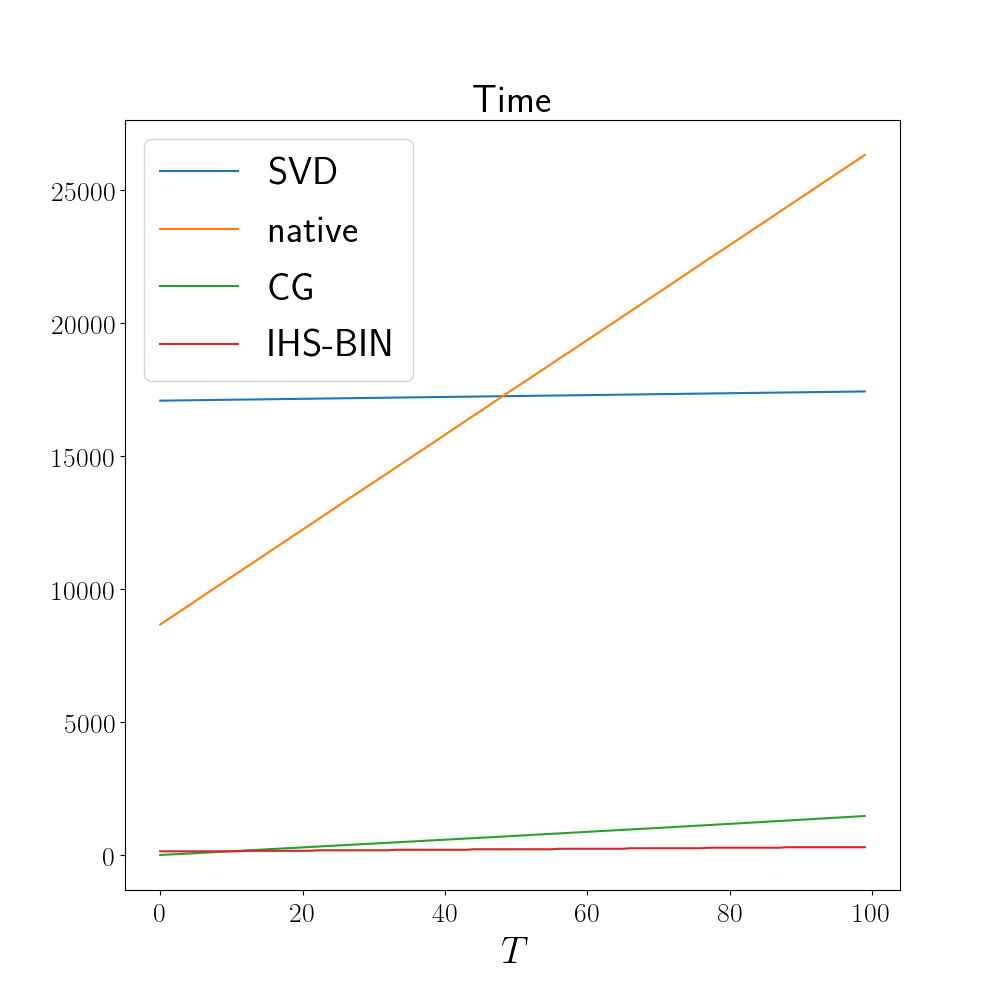}
\end{minipage}
\caption{Training loss, test loss and time. Real vs. Simulated (real-sim). $n=36000, d=20958, m=8000$. $\lambda_\text{min}=100$. $\lambda_\text{max}=10^4$. We do not calculate the eigenvalues of $A^TA$ since $d$ is large. }
\label{fig:realsim}
\end{figure}

\begin{figure}[!htbp]
\centering
\begin{minipage}[t]{\figsize\textwidth}
\centering
\includegraphics[width=\linewidth]{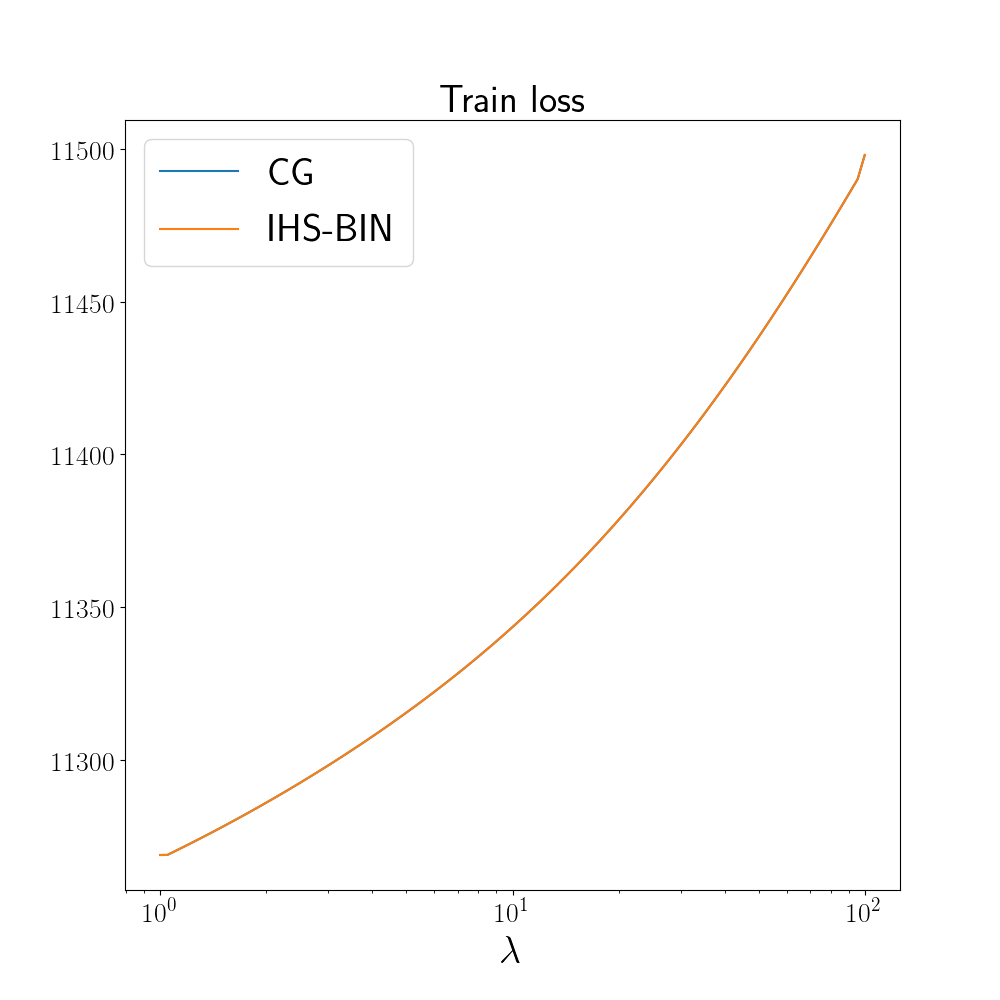}
\end{minipage}
\begin{minipage}[t]{\figsize\textwidth}
\centering
\includegraphics[width=\linewidth]{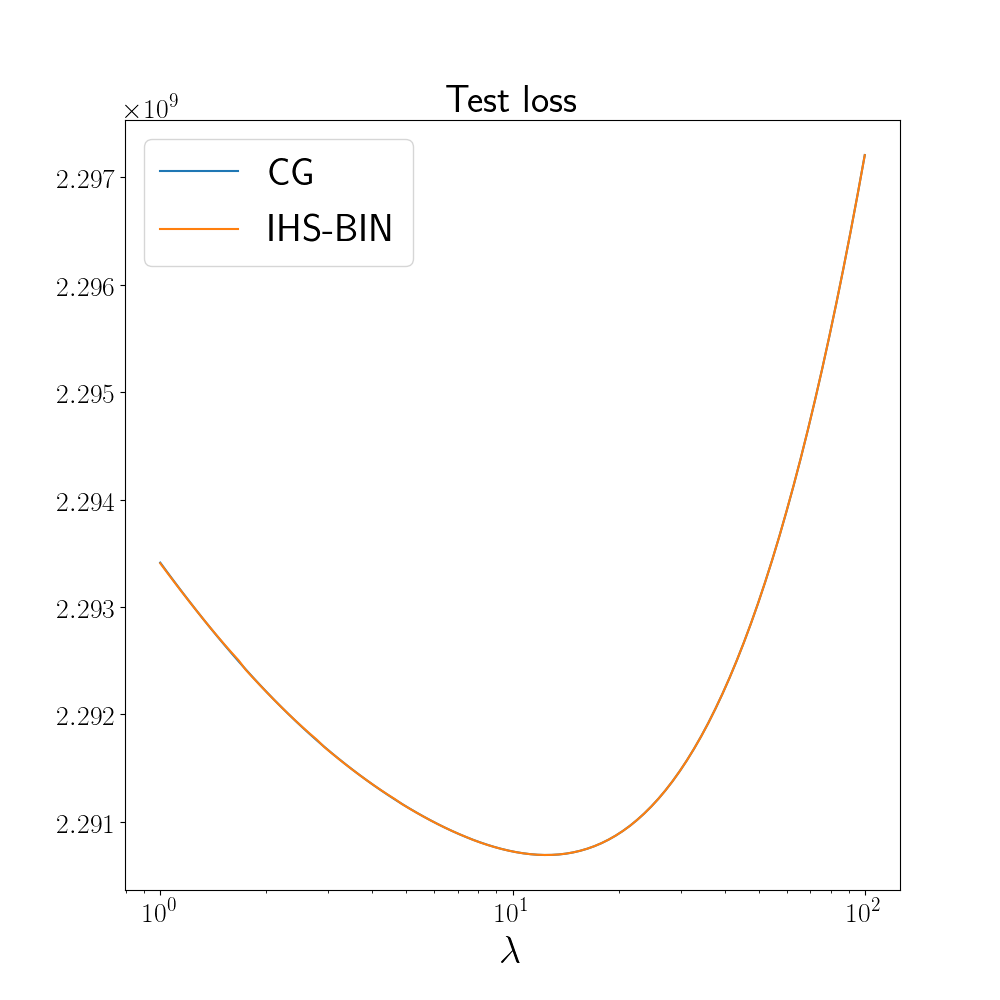}
\end{minipage}
\begin{minipage}[t]{\figsize\textwidth}
\centering
\includegraphics[width=\linewidth]{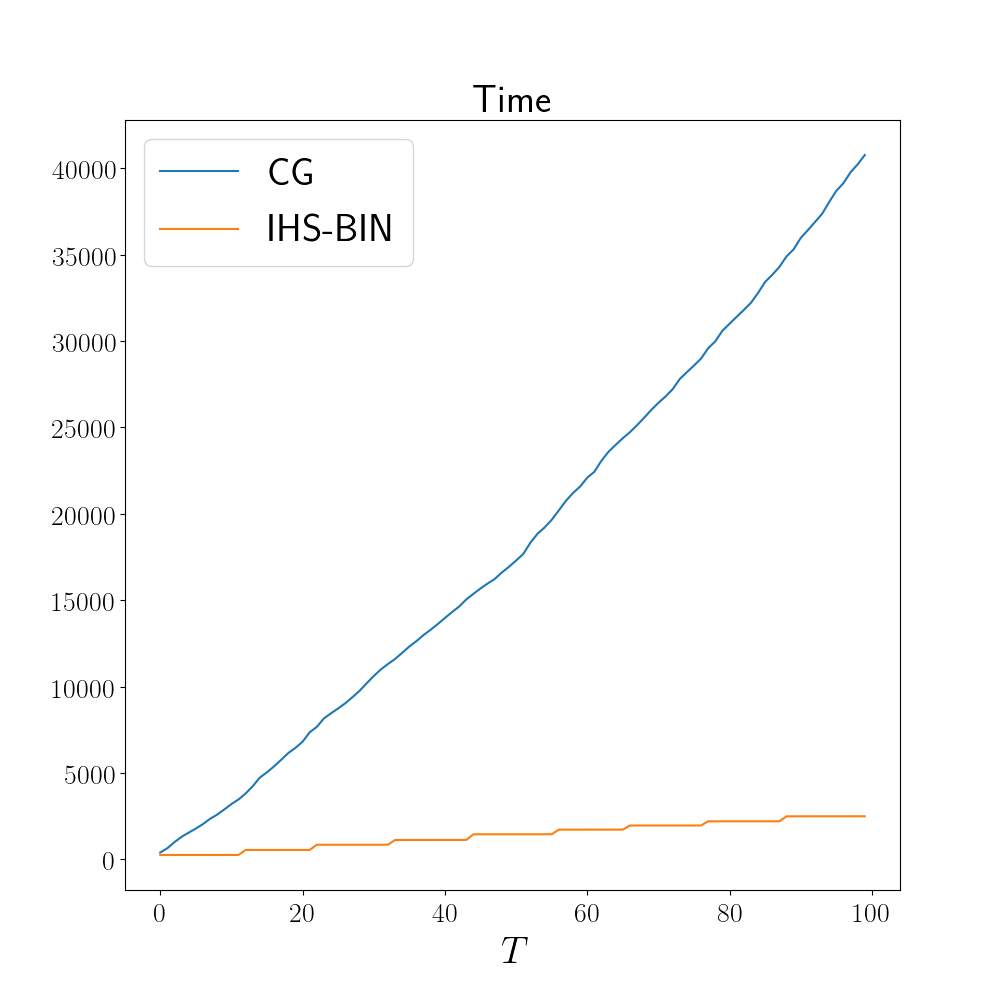}
\end{minipage}
\caption{Training loss, test loss and time. Avazu's Click-through Prediction (avazu). $n=200000, d=50000, m=10000$. $\lambda_\text{min}=1$. $\lambda_\text{max}=100$.  We do not calculate the eigenvalues of $A^TA$ since $d$ is large. }
\label{fig:avazu}
\end{figure}

\subsection{Under-determined case}
We also perform numerical comparisons on under-determined data matrices. We present numerical results in Figure \ref{fig:mnist-kron} to Figure \ref{fig:tfidf}. For the problems with medium-size $n$ (randomly generated data, gisette, RCV1, tifdf), we plot the eigenvalues of $AA^T$.  Similar to the over-determined case, IHS-BIN is faster than other compared methods, especially for large0scale examples. The curves of train loss and test loss from IHS-BIN overlap with curves from other solvers, which implies the accuracy of IHS-BIN.
\begin{figure}[!htbp]
\centering
\begin{minipage}[t]{\figsize\textwidth}
\centering
\includegraphics[width=\linewidth]{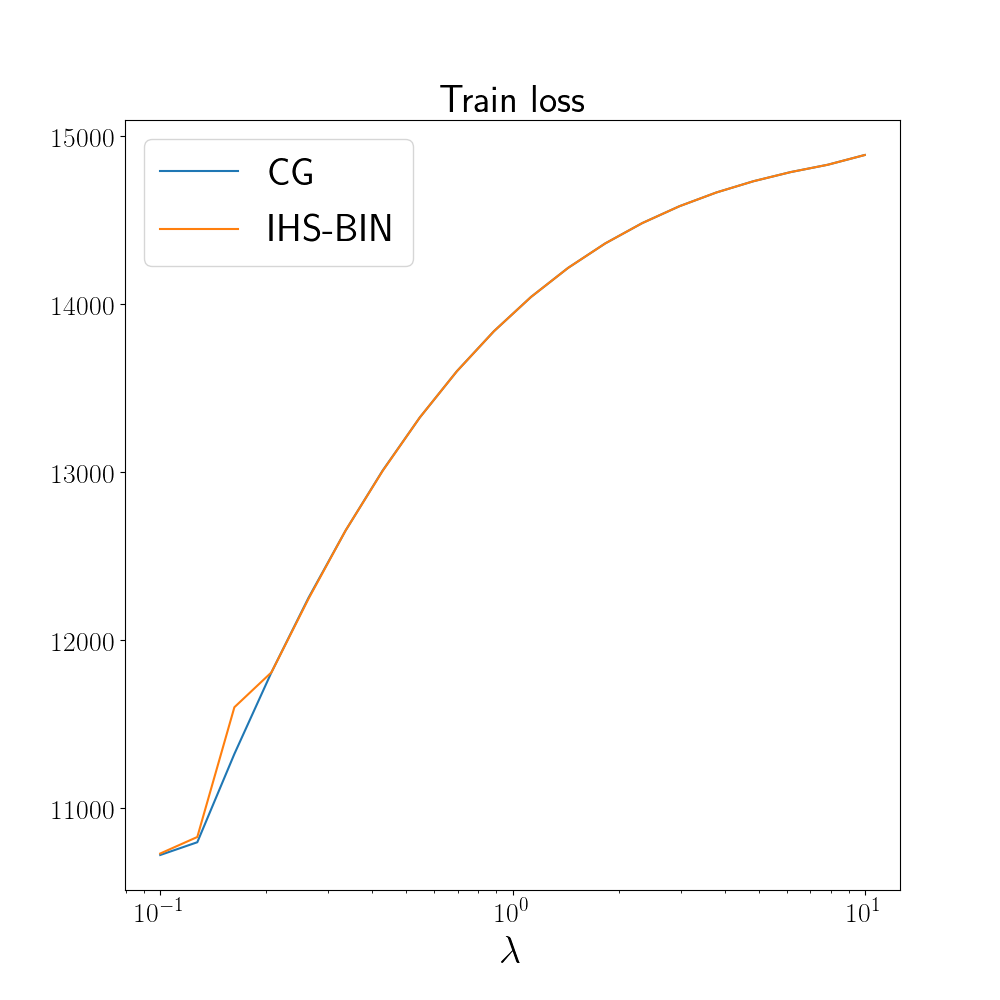}
\end{minipage}
\begin{minipage}[t]{\figsize\textwidth}
\centering
\includegraphics[width=\linewidth]{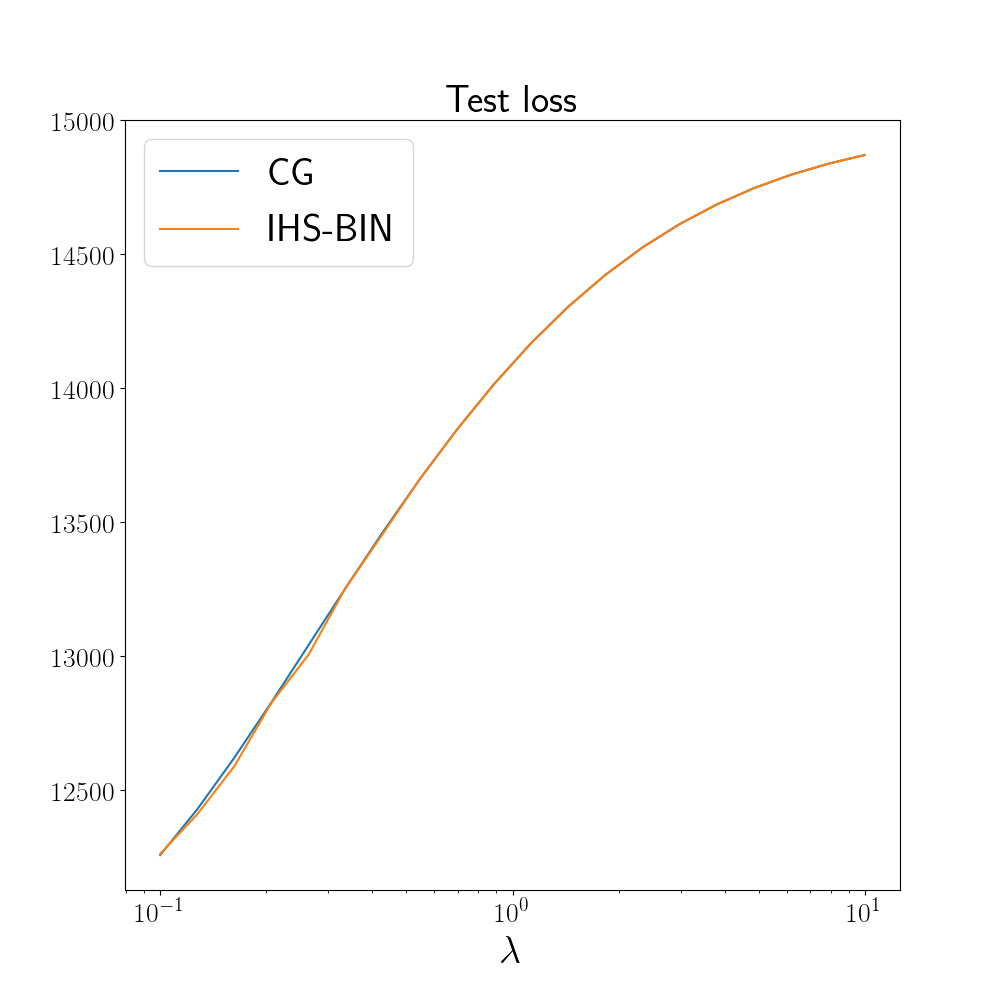}
\end{minipage}
\begin{minipage}[t]{\figsize\textwidth}
\centering
\includegraphics[width=\linewidth]{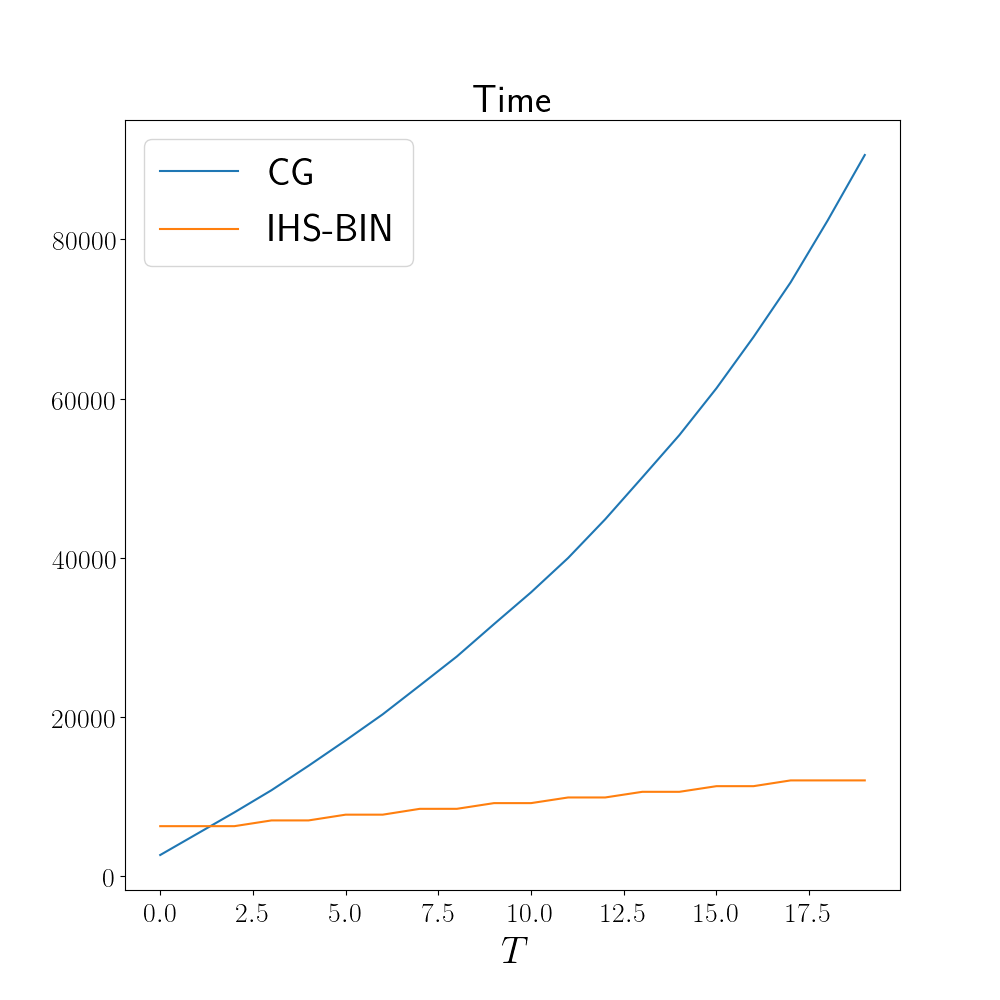}
\end{minipage}
\caption{Training loss, test loss and time. MNIST with quadratic feature embedding. $n=30000, d=608400, m=10000$. $\lambda_\text{min}=0.1$. $\lambda_\text{max}=10$. We do not calculate the eigenvalues of $AA^T$ since $n$ is large. }
\label{fig:mnist-kron}
\end{figure}

\begin{figure}[!htbp]
\centering
\begin{minipage}[t]{\figsize\textwidth}
\centering
\includegraphics[width=\linewidth]{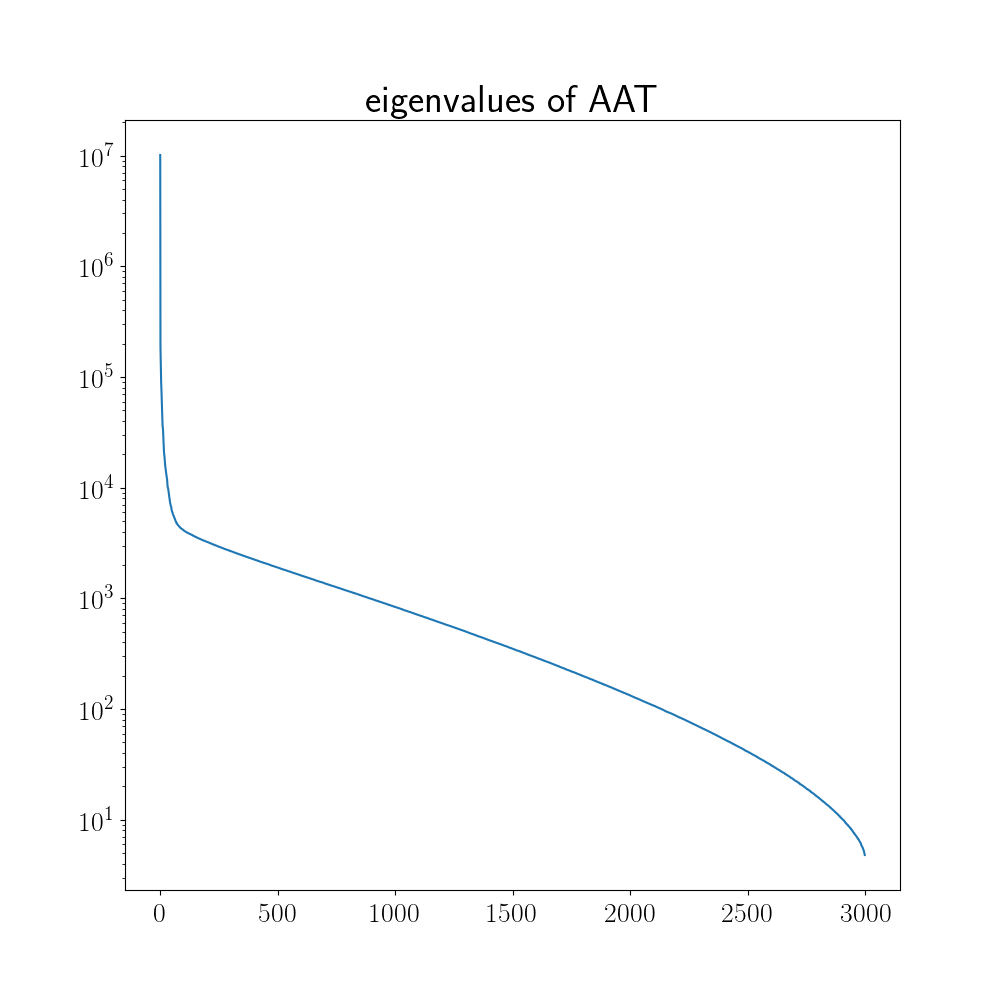}
\end{minipage}
\begin{minipage}[t]{\figsize\textwidth}
\centering
\includegraphics[width=\linewidth]{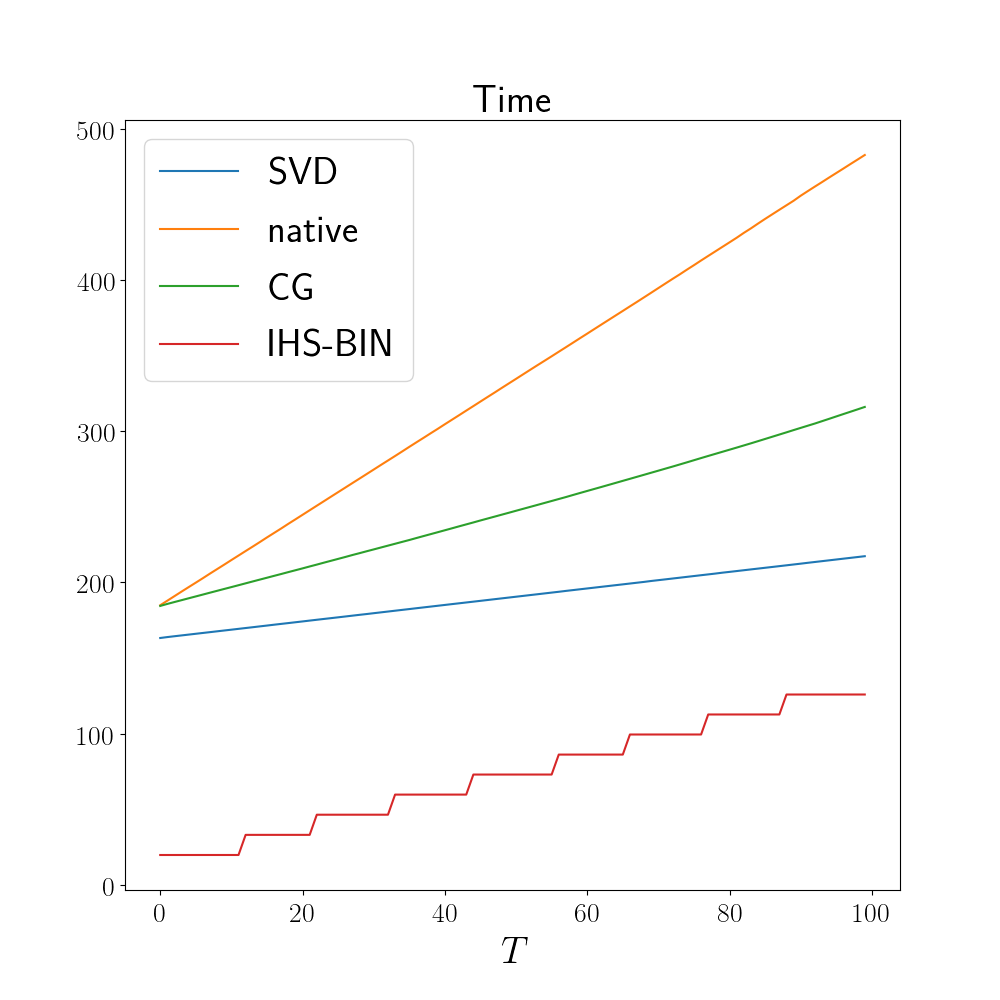}
\end{minipage}
\begin{minipage}[t]{\figsize\textwidth}
\centering
\includegraphics[width=\linewidth]{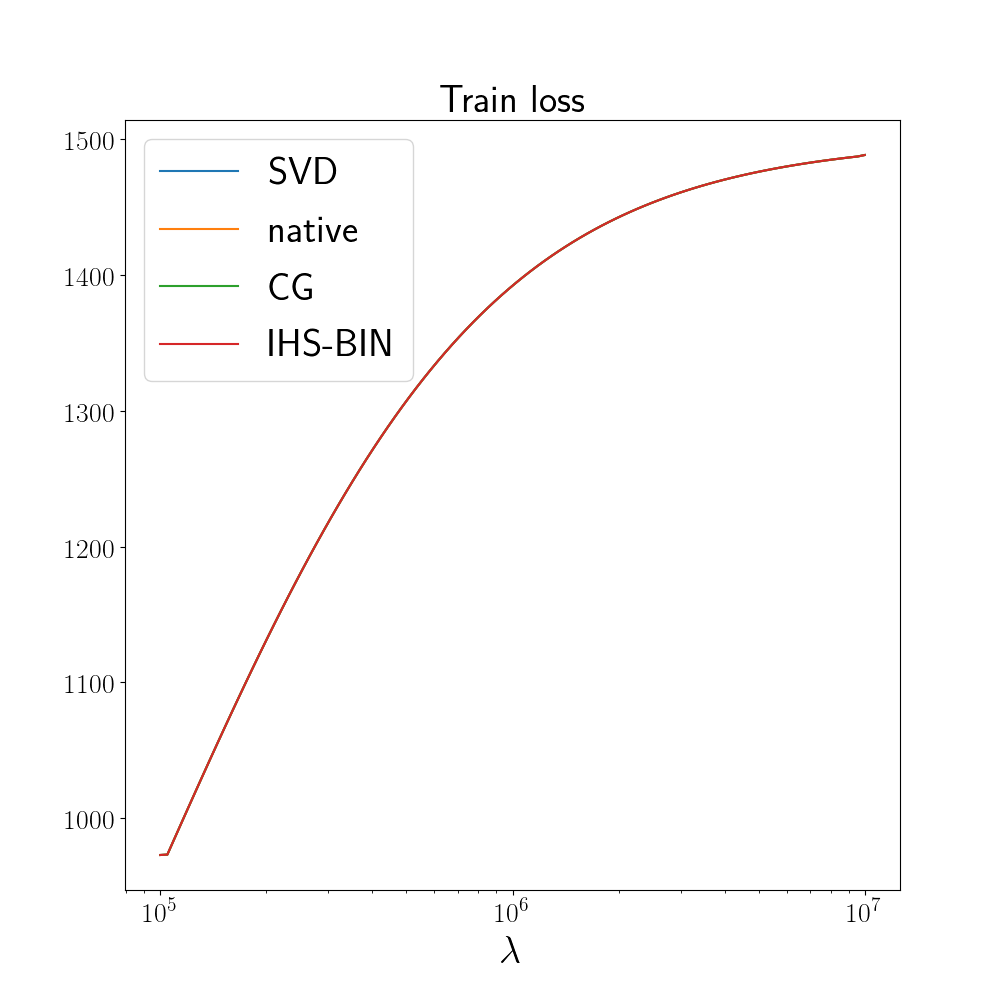}
\end{minipage}
\begin{minipage}[t]{\figsize\textwidth}
\centering
\includegraphics[width=\linewidth]{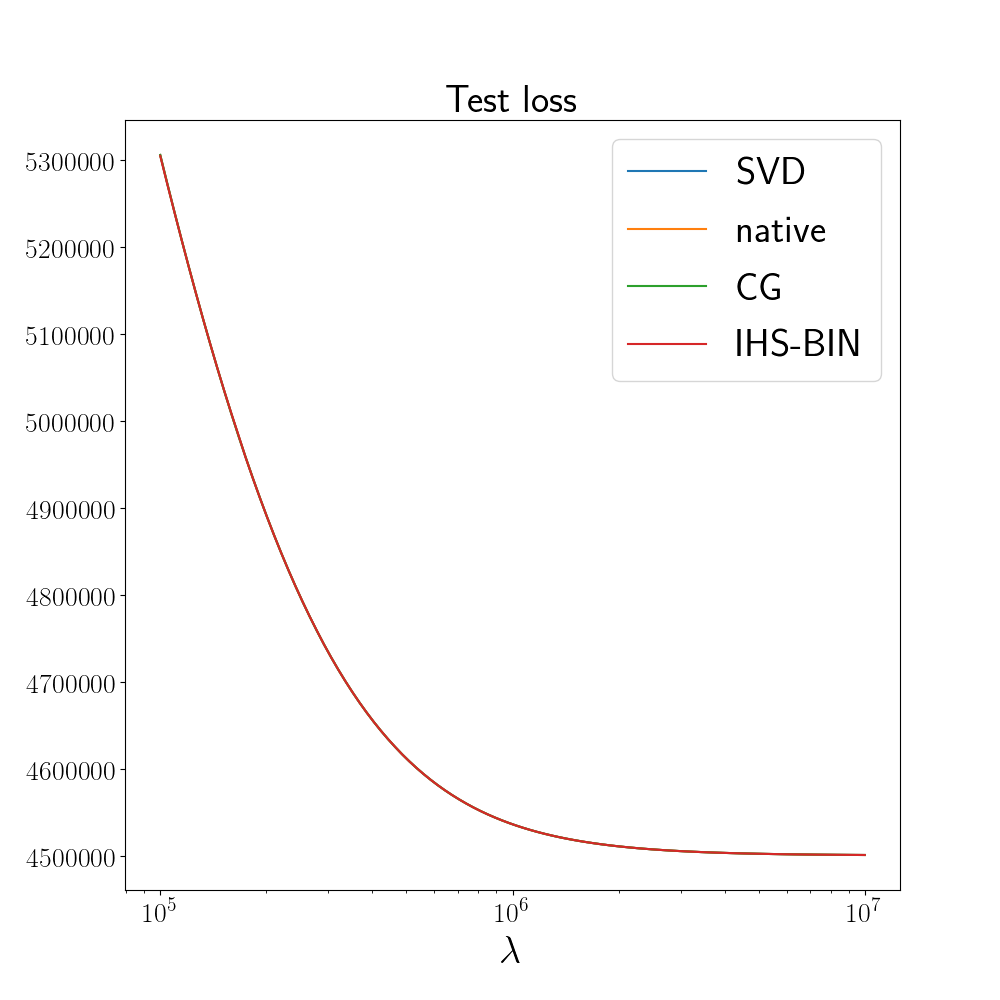}
\end{minipage}
\caption{Training loss, test loss and time. NIPS 2003 Feature Selection Challenge (gisette). $n=3000, d=5000$, $m=800$, $\lambda_\text{min}=10^{5}$. $\lambda_\text{max}=10^{7}$.}
\label{fig:gisette}
\end{figure}

\begin{figure}[!htbp]
\centering
\begin{minipage}[t]{\figsize\textwidth}
\centering
\includegraphics[width=\linewidth]{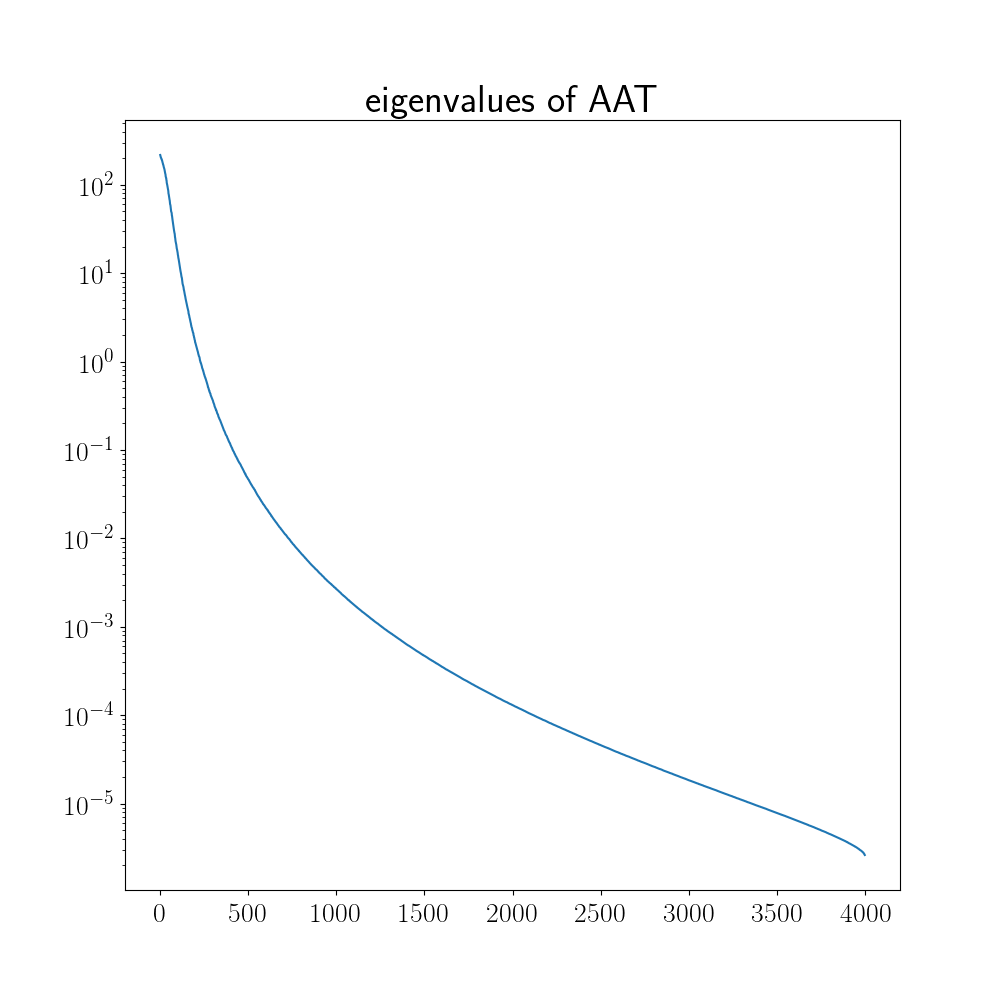}
\end{minipage}
\begin{minipage}[t]{\figsize\textwidth}
\centering
\includegraphics[width=\linewidth]{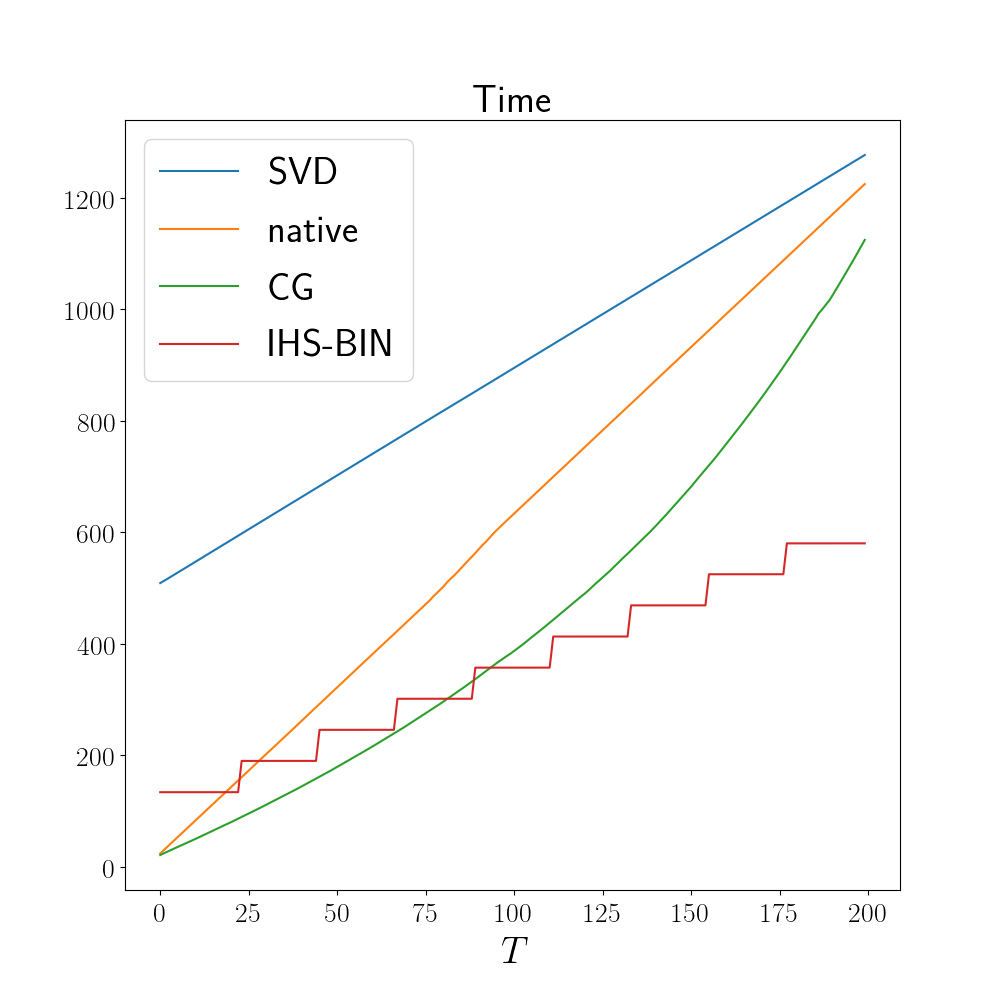}
\end{minipage}
\begin{minipage}[t]{\figsize\textwidth}
\centering
\includegraphics[width=\linewidth]{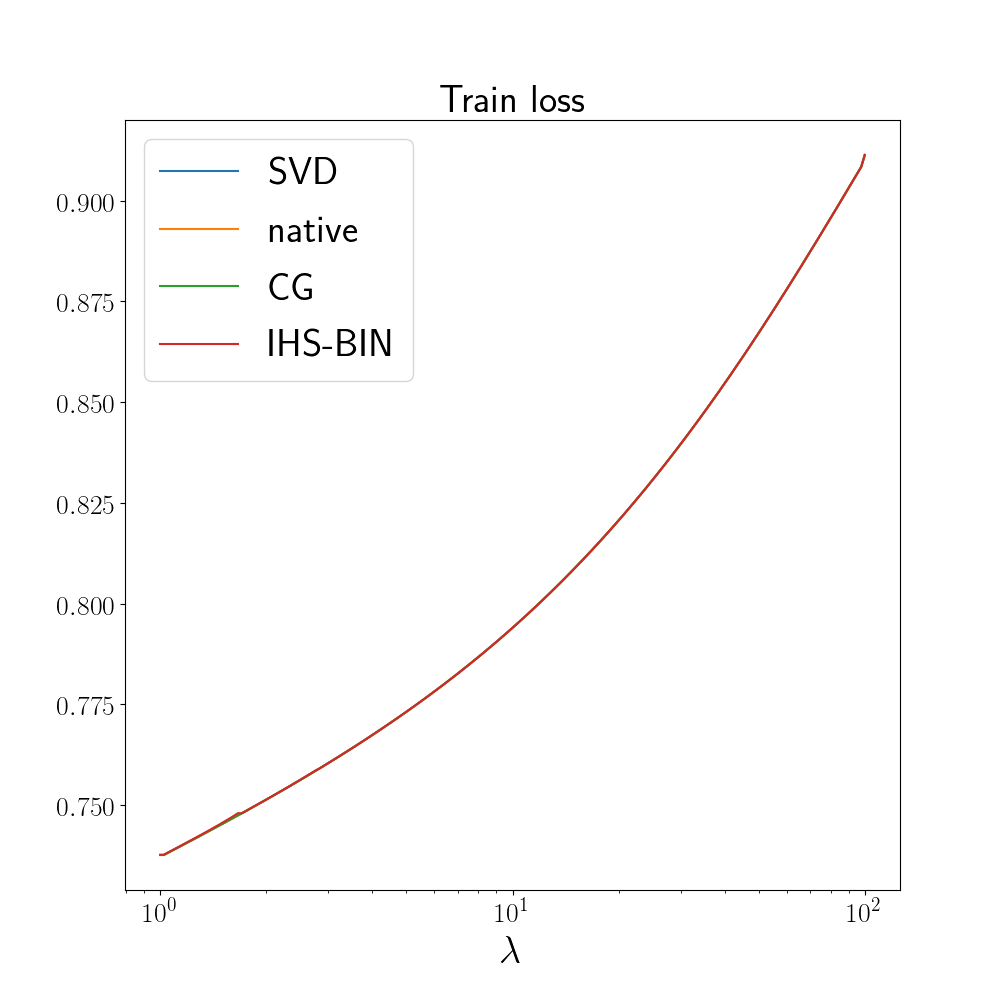}
\end{minipage}
\begin{minipage}[t]{\figsize\textwidth}
\centering
\includegraphics[width=\linewidth]{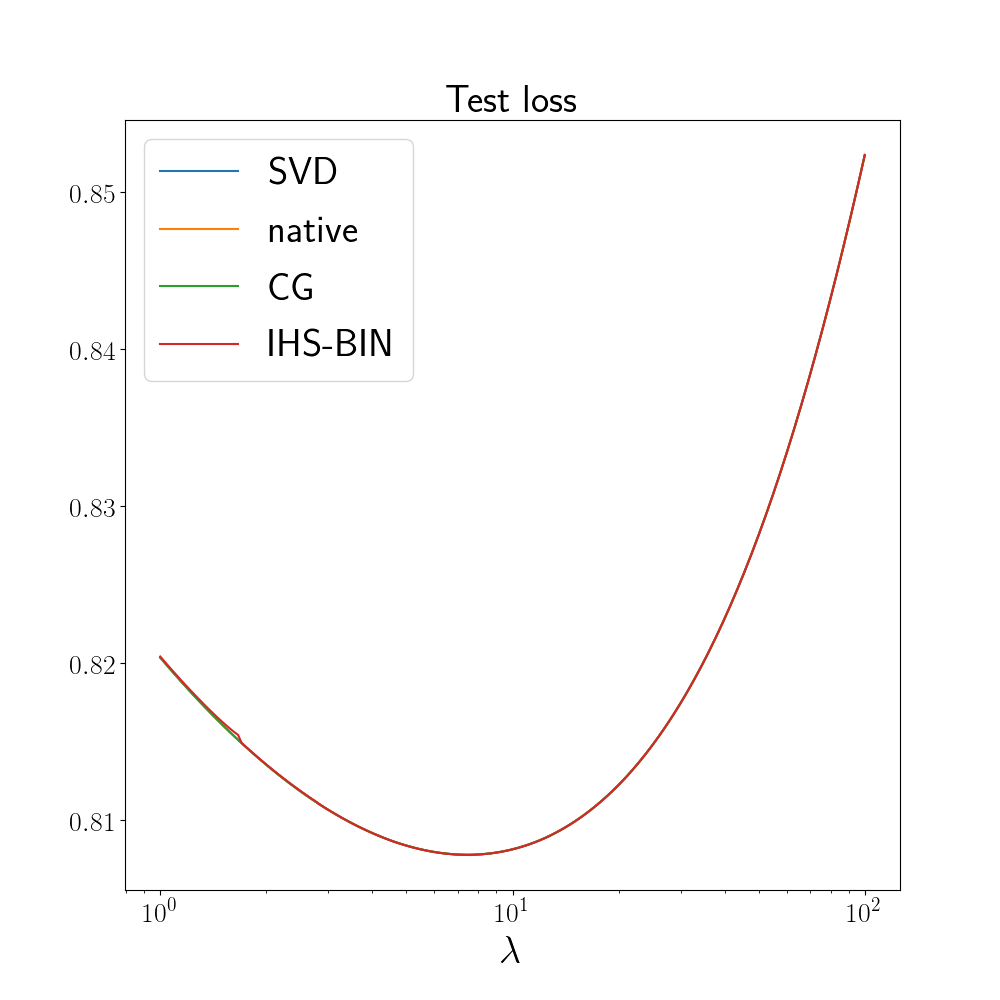}
\end{minipage}
\caption{Training loss, test loss and time. Randomly generated data. $n=4000, d=20000$, $m=2400$. $\sigma=0.02$.  $\lambda_\text{min}=1$. $\lambda_\text{max}=100$.}
\label{fig:random_op}
\end{figure}
\begin{figure}[!htbp]
\centering
\begin{minipage}[t]{\figsize\textwidth}
\centering
\includegraphics[width=\linewidth]{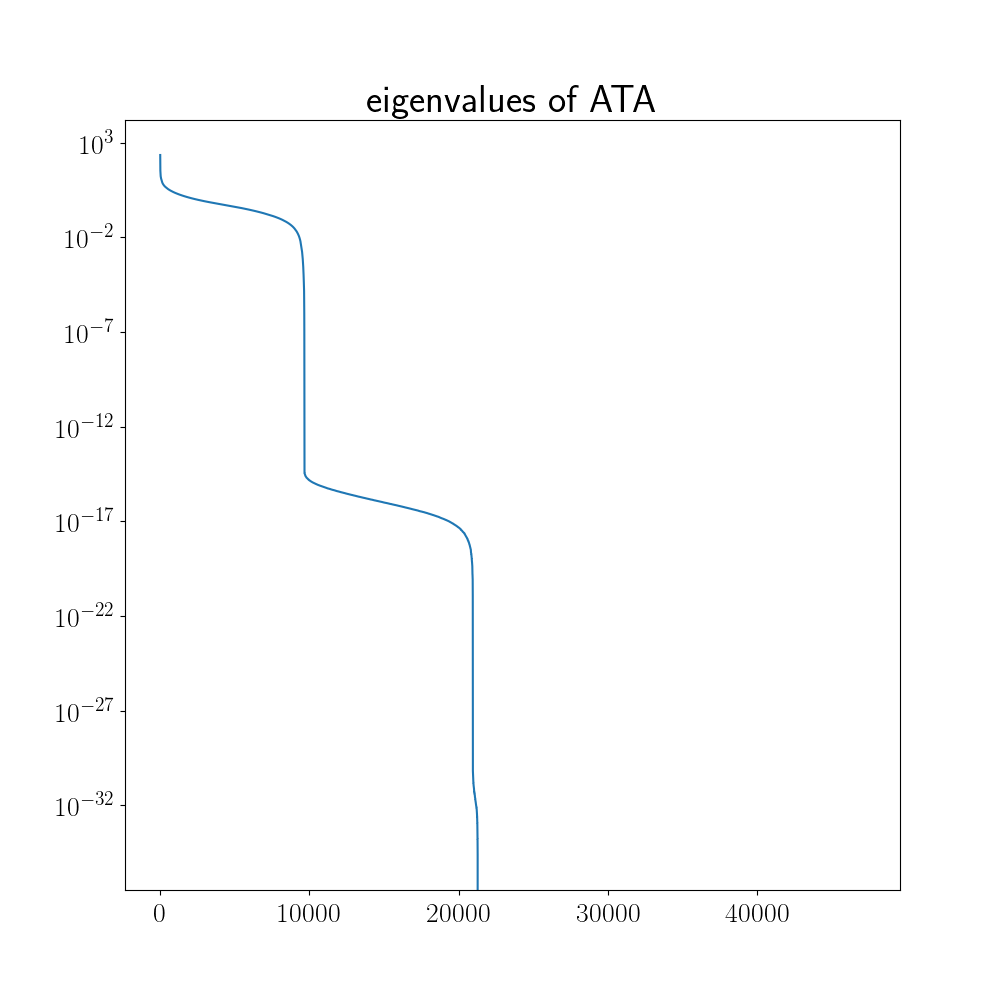}
\end{minipage}
\begin{minipage}[t]{\figsize\textwidth}
\centering
\includegraphics[width=\linewidth]{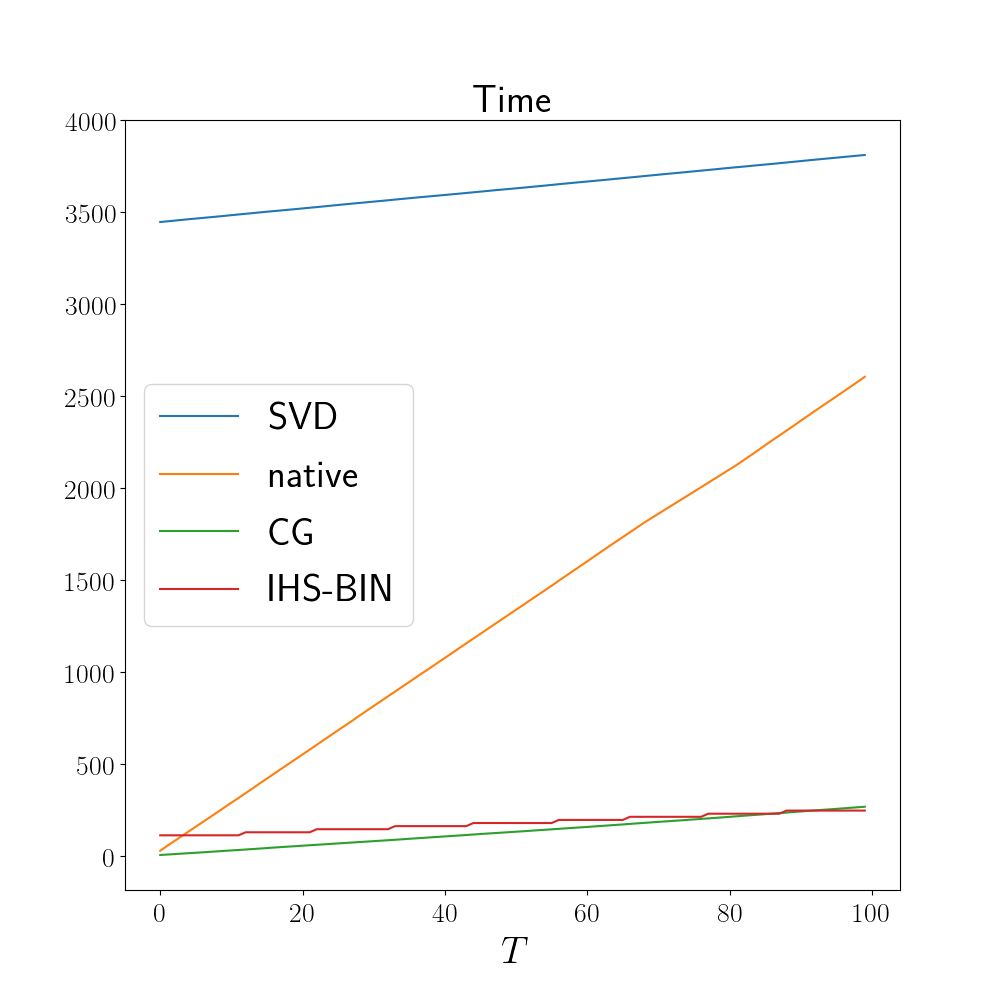}
\end{minipage}
\begin{minipage}[t]{\figsize\textwidth}
\centering
\includegraphics[width=\linewidth]{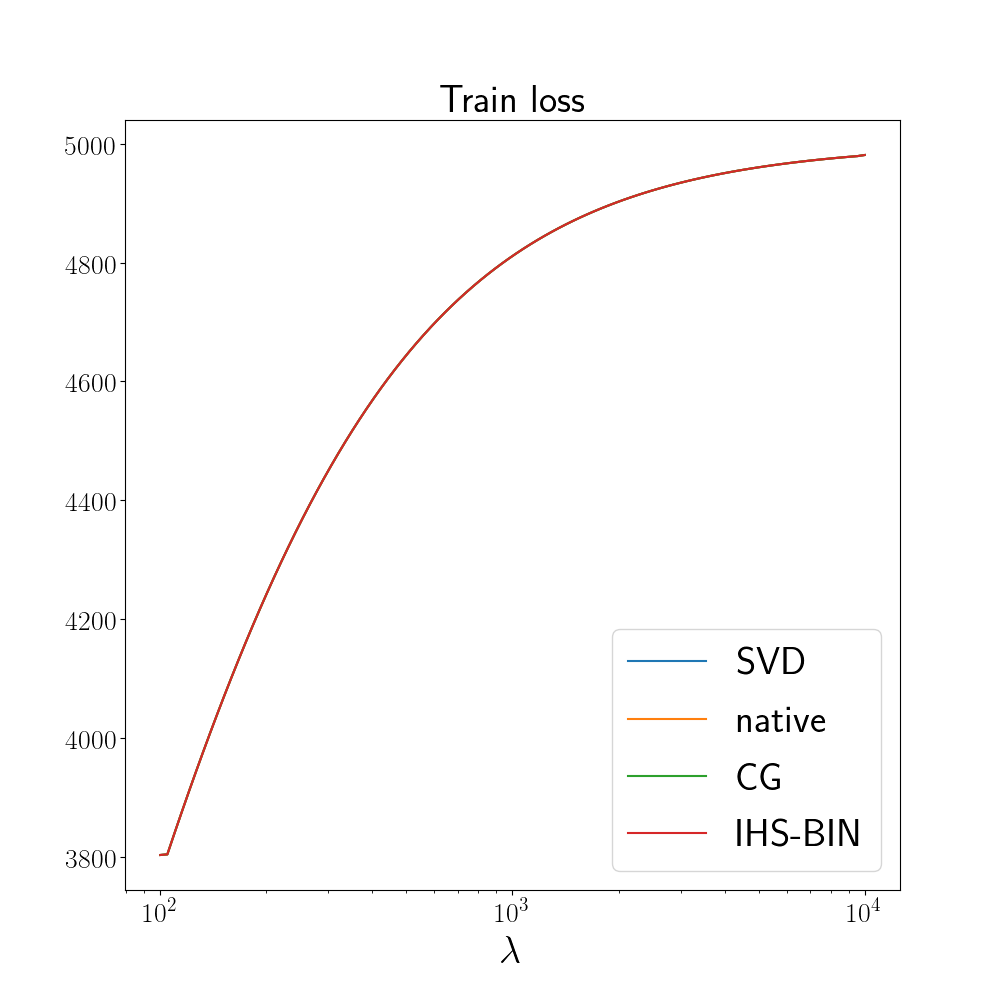}
\end{minipage}
\begin{minipage}[t]{\figsize\textwidth}
\centering
\includegraphics[width=\linewidth]{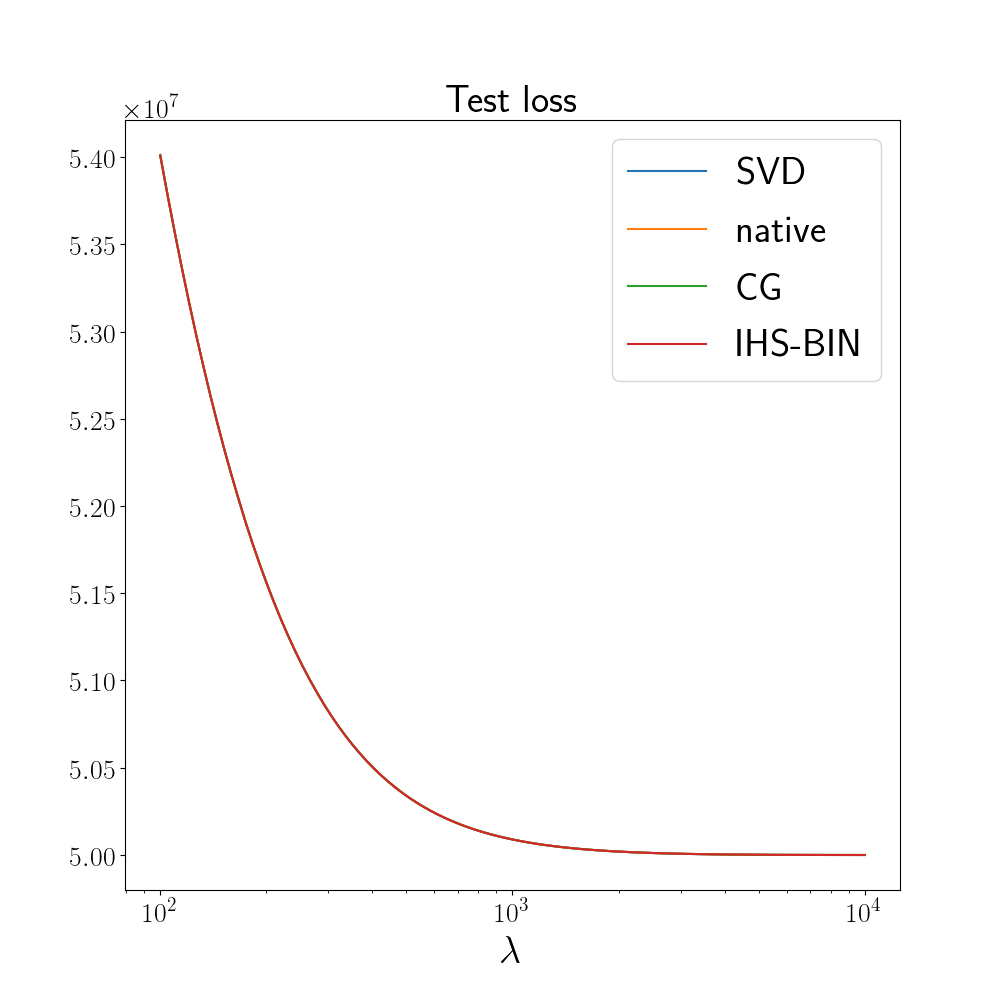}
\end{minipage}
\caption{Training loss, test loss and time. RCV1: A new benchmark collection for text categorization research. $n=10000, d=47236, m=3000$, $\sigma=0.03$. $\lambda_\text{min}=100$. $\lambda_\text{max}=10^4$.}
\label{fig:rcv1}
\end{figure}

\begin{figure}[!htbp]
\centering
\begin{minipage}[t]{\figsize\textwidth}
\centering
\includegraphics[width=\linewidth]{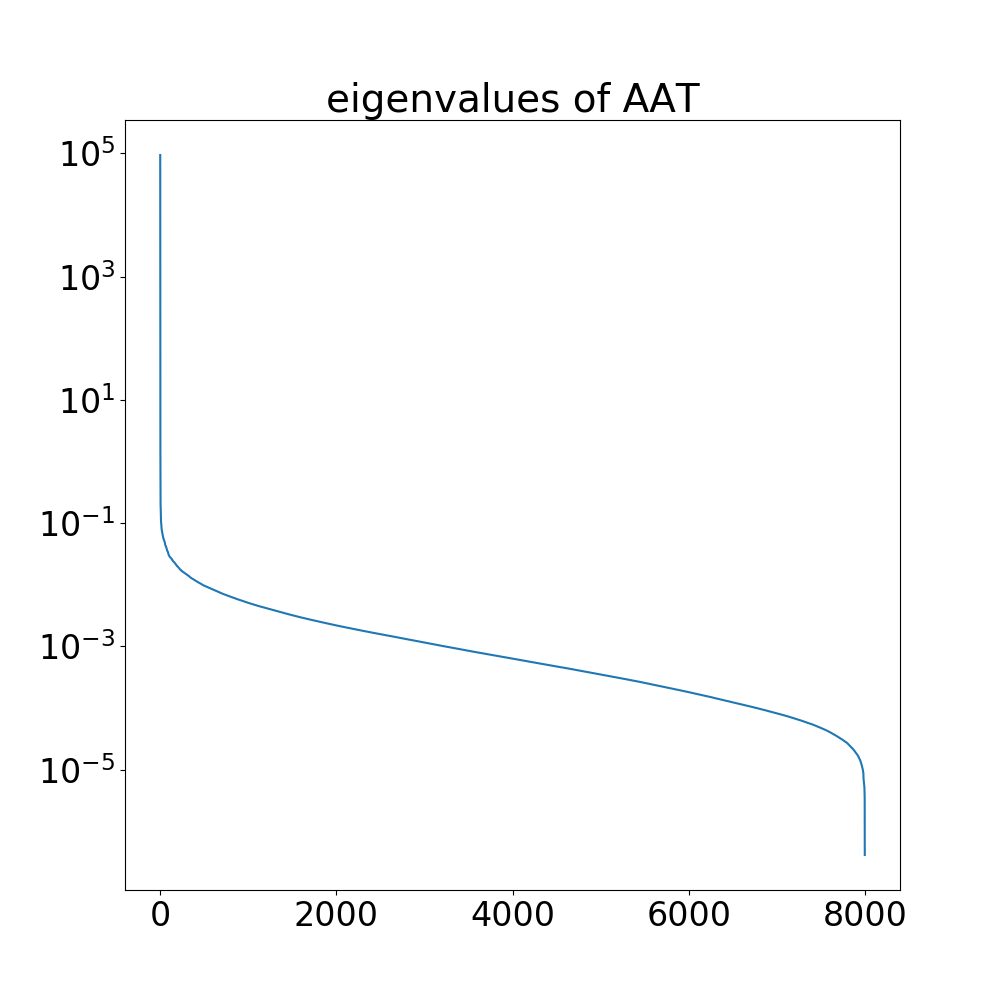}
\end{minipage}
\begin{minipage}[t]{\figsize\textwidth}
\centering
\includegraphics[width=\linewidth]{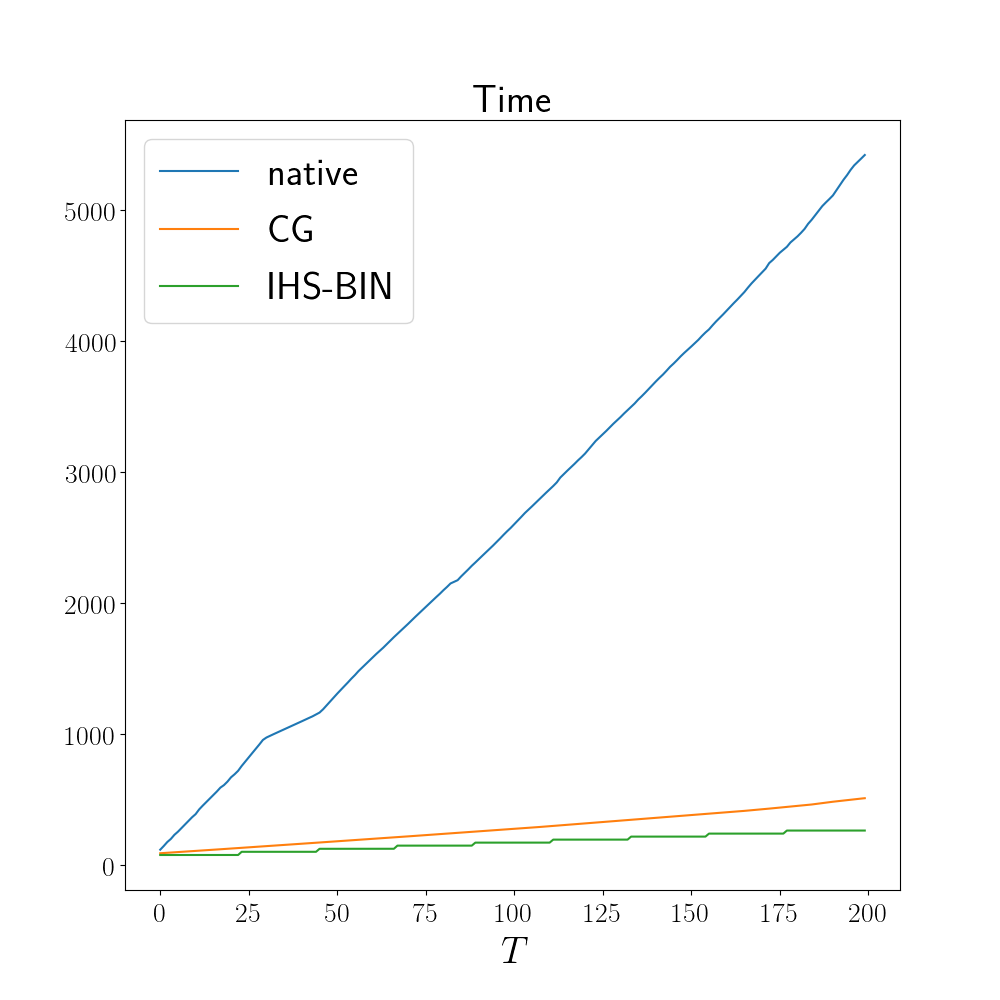}
\end{minipage}
\begin{minipage}[t]{\figsize\textwidth}
\centering
\includegraphics[width=\linewidth]{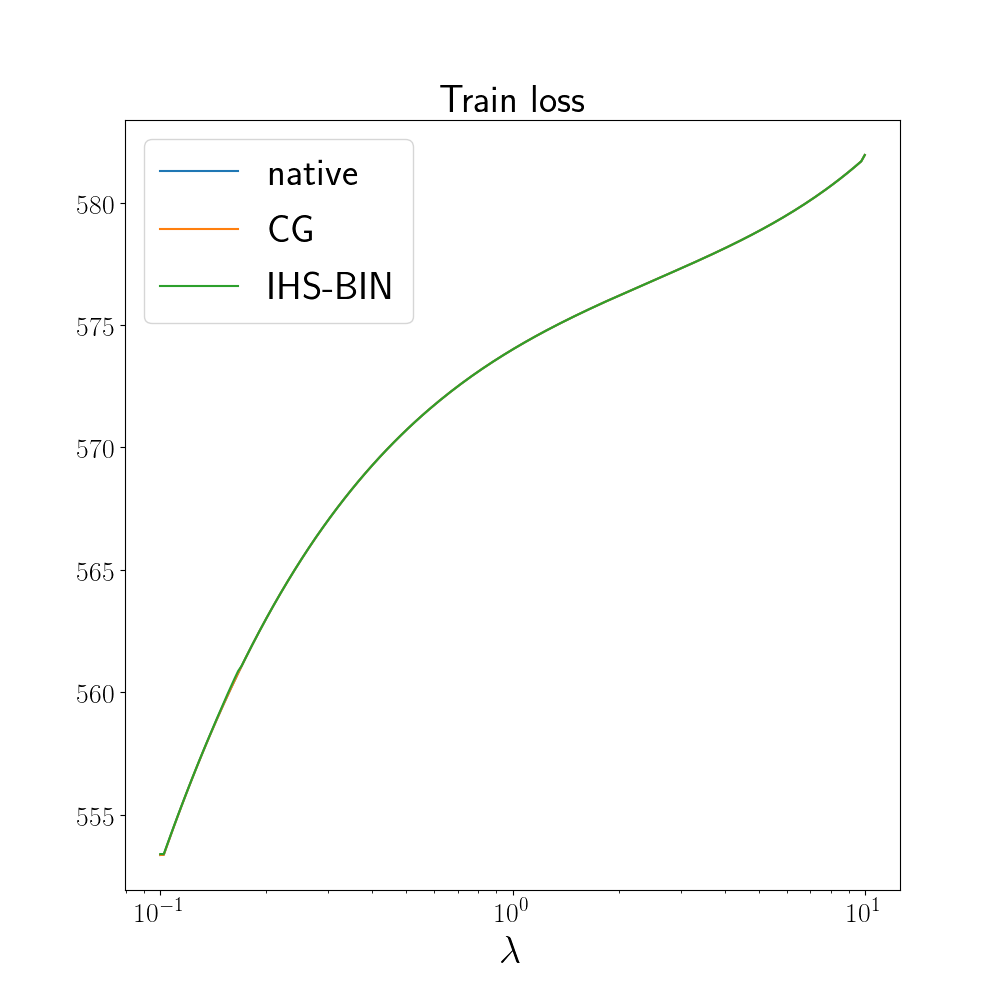}
\end{minipage}
\begin{minipage}[t]{\figsize\textwidth}
\centering
\includegraphics[width=\linewidth]{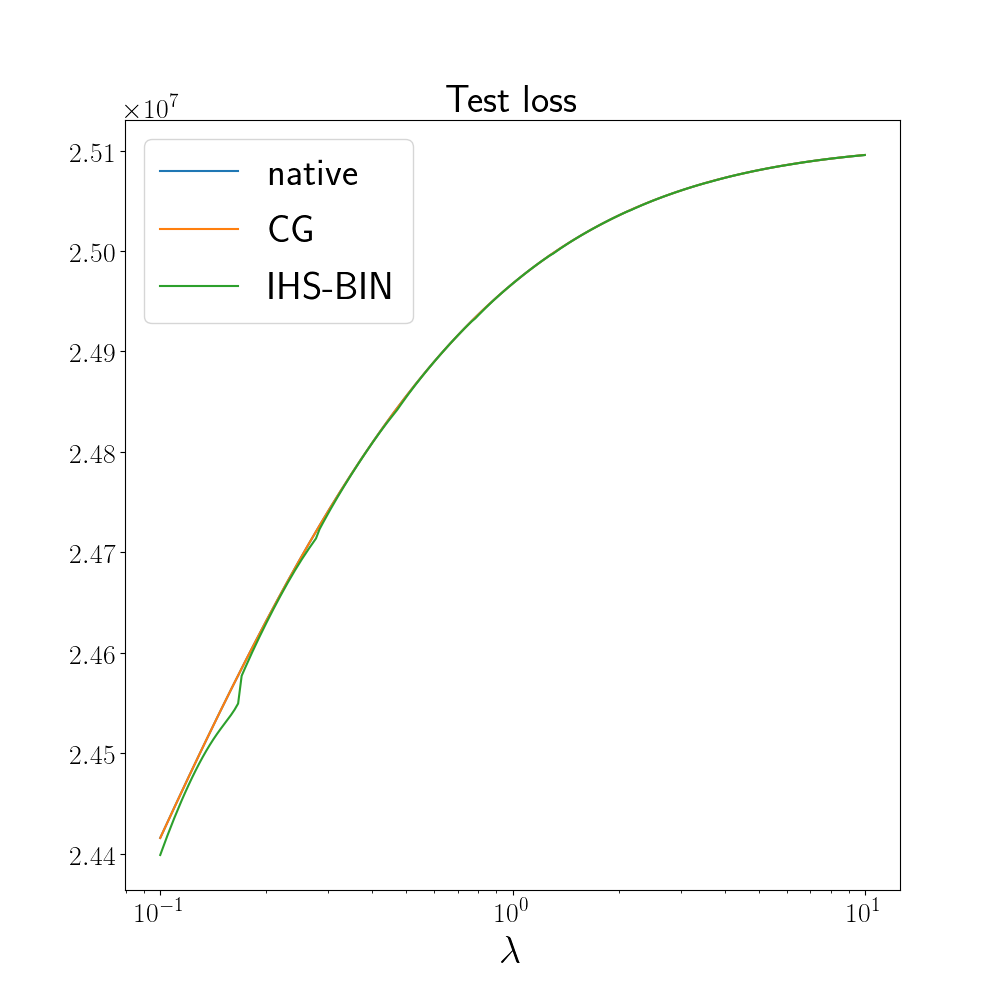}
\end{minipage}
\caption{Training loss, test loss and time. 10-K Corpus (E2006-tfidf). $n=8000, d=150360, m=2000$. $\lambda_\text{min}=0.1$. $\lambda_\text{max}=10$.}
\label{fig:tfidf}
\end{figure}

\section{Conclusion}
We presented IHS-BIN for rapidly computing the entire ridge regularization path. The algorithm is based on analyzing the gradient descent regularization path and accelerating convergence via randomized sketching. Our method improves the state-of-the-art computational complexity of obtaining the solution of ridge regression for $T$ values of the regularization parameter from $\mcO(Td^2)$ or $\mcO(T\mathrm{nnz}(A))$ to $\mcO(Td)$, when $T$ is large. The numerical experiments also demonstrate that IHS-BIN is significantly faster than other solvers, especially for large-scale problems. Our method also leverages the low effective dimensionality of real datasets, which can be used to reduce the sketching dimension. We also investigated adaptively picking the sketch dimension based on the progress of the algorithm. We believe that our algorithm will be quite effective in automatically tuning the regularization parameters of linear models. Moreover, our method can be used in transfer learning and deep feature embedding for training a final linear layer and tuning the regularization parameter efficiently. One potential limitation of the proposed approach is that for medium-size data matrices with high effective dimensions, direct methods such as SVD can be more effective. 
\bmhead{Acknowledgments} 
This work is partially supported by an Army Research Office Early Career Award, the National Science Foundation under grants IIS-183817, ECCS-2037304 and DMS-2134248, ACCESS – AI Chip Center for Emerging Smart Systems, sponsored by InnoHK funding, Hong Kong SAR and a Precourt Institute seed grant.

\bibliography{Newton}


\end{document}